\documentclass[12pt,reqno]{amsart}

\usepackage[margin=1in]{geometry}
\usepackage{amsmath, amssymb, amsthm}
\usepackage{mathtools} \usepackage{xcolor} \usepackage{bm} \usepackage{float}
\usepackage[colorlinks=true, linkcolor=black, citecolor=black,urlcolor=black, filecolor=black]{hyperref} \usepackage{tikz}

\definecolor{cbBlue}{RGB}{0,114,178}
\definecolor{cbVermillion}{RGB}{213,94,0}
\definecolor{cbOrange}{RGB}{230,159,0}
\definecolor{cbSkyBlue}{RGB}{86,180,233}
\definecolor{cbGreen}{RGB}{0,158,115}
\definecolor{cbYellow}{RGB}{240,228,66}
\definecolor{cbPurple}{RGB}{204,121,167}

\theoremstyle{plain}
\newtheorem{theorem}{Theorem}[section]
\newtheorem{proposition}[theorem]{Proposition}
\newtheorem{lemma}[theorem]{Lemma}
\newtheorem{corollary}[theorem]{Corollary}

\theoremstyle{definition}
\newtheorem{definition}[theorem]{Definition}
\newtheorem{example}[theorem]{Example}
\newtheorem{remark}[theorem]{Remark}

\newcommand{\Z}{\mathbb{Z}}
\newcommand{\Q}{\mathbb{Q}}
\newcommand{\N}{\mathbb{N}}
\newcommand{\R}{\mathbb{R}}

\newcommand{\M}{\mathcal{M}}
\newcommand{\hyp}{\Delta_{k,n}}
\newcommand{\Tri}{\mathcal{T}_{k,n}}
\newcommand{\dosp}{\mathsf{S}}
\newcommand{\triv}{\mathbf{1}}

\DeclareMathOperator{\des}{des}
\DeclareMathOperator{\typ}{type}
\DeclareMathOperator{\comp}{comp}
\DeclareMathOperator{\ccomp}{ccomp}
\DeclareMathOperator{\Hyp}{Hyp}
\DeclareMathOperator{\base}{base}
\DeclareMathOperator{\cEq}{cEq}
\DeclareMathOperator{\Stab}{Stab}

\DeclareMathOperator{\cDes}{cDes}

\DeclareMathOperator{\wind}{wind}
\DeclareMathOperator{\Ehr}{Ehr}

\DeclareMathOperator{\hDOSP}{hDOSP}

\DeclareMathOperator{\col}{color}

\DeclareMathOperator{\Ind}{Ind}

\DeclareMathOperator{\Hom}{Hom}

\DeclareMathOperator{\tr}{tr}

\DeclareMathOperator{\cdes}{cdes}
\DeclareMathOperator{\ceq}{ceq}
\DeclareMathOperator{\cmaj}{cmaj}
\DeclareMathOperator{\content}{content}
\DeclareMathOperator{\ch}{ch}

\begin{document}

\title{A flag refinement of the $ h^* $-formula for the hypersimplices}
\author[Y. Jiang]{Yuhan Jiang}
\email{jyh@math.berkeley.edu}
\address{University of California, Berkeley}

\begin{abstract}
	The hypersimplex $ \hyp $
	is the convex hull of all 0/1-vectors of length $ n $ with coordinate sum $ k $.
	Early conjectured, and Kim proved a combinatorial formula for the $ h^* $-polynomial of $ \hyp $ in terms of hypersimplicial decorated ordered set partitions.
	In this paper, we refine the Early--Kim formula to the flag $h$-numbers of the alcove triangulation of $\hyp$. We also give a representation theoretic interpretation of its flag $f$-numbers in terms of Young permutation modules.
\end{abstract}

\maketitle

\section{Introduction}
The hypersimplex $ \hyp = \{ x \in [0,1]^n: \sum_i x_i = k\} $ is a $ (n-1) $ -dimensional polytope with $ \binom{n}{k}
$ vertices. The \emph{Ehrhart polynomial} $ \Ehr(P,t) $ of a lattice polytope $ P $ is defined such that
\[
	\Ehr(P,t) = |tP \cap \Z^n|, \qquad \text{ for }t \in \N.
\]
The \emph{ $ h^* $-polynomial} of $ P $ is defined by the generating function
\[
	\sum_{t \ge 0} \Ehr(P,t) z^t = \frac{h^*(P,z)}{(1-z)^{\dim(P)+1}}.
\]
Early conjectured a combinatorial formula for the $ h^* $-polynomial of $ \hyp $ in terms of \emph{hypersimplicial
  decorated ordered set partitions}, which was later proved by Kim~\cite{Kim2020}.
\begin{definition}\label{def:dosp}
	Let $(S_1, \dots, S_p)$ be an ordered set partition of $ [n] $ and $(s_1, \dots, s_p)$ be a composition of $k$.
	There is an equivalence relation on these pairs, defined by
	\[
		((S_1)_{s_1},(S_2)_{s_2},\dots,(S_p)_{s_p}) \sim
		((S_2)_{s_2},\dots,(S_p)_{s_p},(S_1)_{s_1}).
	\]
	A \emph{decorated ordered set partition}
	\begin{equation}\label{eq:dosp}
		\dosp = [((S_1)_{s_1},\dots,(S_p)_{s_p})]
	\end{equation}
	is an equivalence class of such pairs.
	A decorated ordered set partition is \emph{hypersimplicial} if $ 1 \leq s_i \leq |S_i|-1 $ for all $ i $. We denote the set of hypersimplicial decorated
	ordered set partitions of type $ (k,n) $ by $ \hDOSP(k,n) $. We call each $ S_i $ a block, place the blocks on a
	circle clockwise, and regard $ s_i $ as the clockwise distance between adjacent blocks $ S_i $ and $ S_{i+1} $. The
	\emph{winding number} of $ \dosp $ is defined as the number of times we wrap around the circle as we trace $ 1 \to
		2 \to \cdots \to n \to 1 $ clockwise.
\end{definition}
\begin{figure}[H]
	\centering
	\begin{tikzpicture}[scale=1.9]
	\draw[gray!30] (0,0) circle (1);
	\foreach \p in {0,...,6}{
			\node[circle,draw=gray!55,fill=white,inner sep=1pt] at ({90-360*\p/7}:1) {};
			\node[black!75,font=\scriptsize] at ({90-360*\p/7}:0.86) { $ \p $ }; }
	\foreach \p in {0,2,5,6}{
			\node[circle,fill=black,inner sep=1.8pt] at ({90-360*\p/7}:1) {}; }
	\node[font=\small,anchor=south]      at ({90}:1.16)        { $ \{1,5,6\} $ };
	\node[font=\small,anchor=west]       at ({90-720/7}:1.20)  { $ \{3,7,8,11\} $ };
	\node[font=\small,anchor=east]       at ({90-1800/7}:1.20) { $ \{4,10,12\} $ };
	\node[font=\small,anchor=south east] at ({90-2160/7}:1.18) { $ \{2,9\} $ };
	\draw[->,thick] ({90-7}:1.10) arc ({90-7}:{90-720/7+7}:1.10);
	\node[font=\small] at ({90-360/7}:1.24) { $ 2 $ };
	\draw[->,thick] ({90-720/7-7}:1.10) arc ({90-720/7-7}:{90-1800/7+7}:1.10);
	\node[font=\small] at ({90-1260/7}:1.24) { $ 3 $ };
	\draw[->,thick] ({90-1800/7-7}:1.10) arc ({90-1800/7-7}:{90-2160/7+7}:1.10);
	\node[font=\small] at ({90-1980/7}:1.24) { $ 1 $ };
	\draw[->,thick] ({90-2160/7-7}:1.10) arc ({90-2160/7-7}:{90-360+7}:1.10);
	\node[font=\small] at ({90-2340/7}:1.24) { $ 1 $ };
	\end{tikzpicture}
	\caption{The decorated ordered set partition $
			(\{1,5,6\}_2,\{3,7,8,11\}_3,\{4,10,12\}_1,\{2,9\}_1) $ of type $ (7,12) $ with winding number $ 5 $.}\label{fig:dosp}
\end{figure}

\begin{remark}
	Let $(S_1,\dots,S_p)$ be an ordered set partition of $[n]$, and $(s_1,\dots,s_p)$ be a composition of $k$.
	For a set $S$, let $x_S = \sum_{i \in S} x_i$.
	Derksen and Fink~\cite{DerksenFink} showed that the cones determined by the inequalities
	\begin{equation}
		\begin{aligned}
			x_{S_1}          & \leq s_1 \\
			x_{S_1 \cup S_2} & \leq s_1+s_2 \\
			                 & \vdots \\
			x_{[n]}          & \leq k.
		\end{aligned}
	\end{equation}
	form a basis of the valuative ring of matroids on the ground set $[n]$ of rank $k$.
\end{remark}

\begin{theorem}[\cite{Early2017,Kim2020}]\label{thm:earlykim}
	The $ h^* $ -polynomial of the hypersimplex $ \hyp $ is given by
	\[
		h^*(\hyp,z) = \sum_{\dosp \in \hDOSP(k,n)} z^{\wind(\dosp)}.
	\]
\end{theorem}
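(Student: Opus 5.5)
We plan to go through the alcove triangulation $\Tri$ of $\hyp$ (the Lam--Postnikov / Sturmfels triangulation by alcoves of the affine $A_{n-1}$ arrangement $x_i-x_j\in\Z$). This triangulation is unimodular, so $h^*(\hyp,z)$ equals the $h$-polynomial of $\Tri$. Since $\Tri$ is regular (it is cut out by a hyperplane arrangement), it is shellable, and the $h$-polynomial can be computed from any shelling by recording, for each maximal simplex, the number of facets it shares with earlier simplices. So the plan is to produce a shelling of $\Tri$ and a bijection between its maximal simplices and $\hDOSP(k,n)$ under which the restriction number is $\wind(\dosp)$. Alternatively, and more robustly, compute $f$-numbers of interior faces and use Ehrhart--Macdonald reciprocity; I would try the shelling first.

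\textbf{Step 1 (faces).} Recall the Lam--Postnikov labeling: the maximal alcoves of $\hyp$ are in bijection with permutations $w\in S_n$ with $w(n)=n$ (up to cyclic rotation) and $\des(w)=k-1$, or equivalently with circular sequences of vertices $v_0\to v_1\to\cdots\to v_{n-1}\to v_0$ in which each step moves a single $1$ one position to the right, cyclically. A general face of $\Tri$ is then a cyclic chain of vertices $v_{0}\to v_{1}\to\cdots\to v_{p-1}$, where each step moves a \emph{set} of ones. Encoding step $i$ by the set $S_i$ of coordinates that change and the number $s_i$ of ones that move gives exactly a decorated ordered set partition with $s_i\ge 1$. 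Conversely, each such class determines a face. Interior faces correspond to the condition $s_i\le |S_i|-1$, since a face lies on the boundary of $\hyp$ iff some coordinate is constant ($0$ or $1$) on it. So the hypersimplicial decorated ordered set partitions index the interior faces of $\Tri$, with $p-1$ the dimension.

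\textbf{Step 2 (reciprocity).} By Ehrhart--Macdonald reciprocity applied to the unimodular triangulation, $\sum_{t\ge1}|\mathrm{int}(t\hyp)\cap\Z^n|z^t = \sum_{F\text{ interior}} \left(\frac{z}{1-z}\right)^{\dim F+1}$. This equals $z^{n}h^*(\hyp,1/z)/(1-z)^n$. The desired identity then reduces to a purely combinatorial identity expressing interior faces weighted by $(z/(1-z))^{p}$ as $\sum_{\dosp} z^{n-\wind(\dosp)}/(1-z)^n$. Here the right-hand side uses the symmetry $\wind\mapsto n-\wind$, or $k\mapsto n-k$ via complementation.

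\textbf{Step 3 (main obstacle).} This step is the combinatorial heart. We need to show that each $\dosp$ with $p$ blocks, expanded via $(z/(1-z))^p=\sum$ over refinements, matches the winding statistic. Concretely, I would realize each interior face as the set of its maximal refinements. Each $\dosp$ has a canonical ``minimal'' coarsening determined by descents in the cyclic reading $1\to2\to\cdots\to n$. The number of ways to refine is then governed by a descent-type count, and the winding number equals the number of forced wraps. Equivalently, I would construct a half-open decomposition $\hyp=\bigsqcup \mathrm{relint}$ of pieces indexed by $\hDOSP(k,n)$, each of which is a half-open simplex with $\wind(\dosp)$ removed facets. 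I expect the difficulty to lie in choosing the generic direction (the line shelling) so that the removed facets are exactly counted by $\wind$. I would first try a direction that breaks ties by the order $1<2<\cdots<n$ around the circle. I would check it against small cases such as $\Delta_{2,4}$, where $h^*=1+z$... (more precisely $1+z$ for $\Delta_{2,4}$ of volume $4$ would be wrong; it is $1+2z+z^2$?). I would verify these numbers before committing.
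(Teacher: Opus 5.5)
Your reciprocity route depends on Step~1, and Step~1 is false: $\hDOSP(k,n)$ does not index the interior faces of $\Tri$. Take $\Delta_{2,4}$. The four alcoves all contain the diagonal $\{13,24\}$ (Table~\ref{tab:d24}). So the interior faces are that edge, the four triangles $\{12,13,24\},\{13,14,24\},\{13,23,24\},\{13,24,34\}$, and the four tetrahedra. That is nine faces, and none of them is a vertex. By contrast $|\hDOSP(2,4)|=4$, and one of these classes has a single block, which your dimension rule would make an interior vertex. Granting the theorem, $|\hDOSP(k,n)|=h^*(\hyp,1)$ is the number of alcoves, so the $\dosp$'s are equinumerous with maximal simplices, not with interior faces.

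The encoding also breaks down on its own terms. Along the edge $1010\to 0101\to 1010$ every coordinate changes in both steps, so ``the set of coordinates that change'' does not produce a set partition.

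As a result, the identity you set as the goal of Step~3 is false. For $(k,n)=(2,4)$ it would read $z(1-z)^3+3z^2(1-z)^2=z^2+2z^3+z^4$. A smaller point: there is no symmetry $\wind\mapsto n-\wind$ ($h^*(\Delta_{2,5})=1+5z+5z^2$ is not palindromic), but none is needed, since $z^n h^*(1/z)$ is just a substitution. The shelling alternative is only named. You give no shelling order, no bijection between alcoves and $\hDOSP(k,n)$, and no argument that the restriction number equals $\wind$, and that construction is the whole content of the theorem.

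For comparison, the paper uses neither interior faces nor shellings; it obtains the statement by coarsening Theorem~\ref{thm:hmain}. With the coloring $\col(I)=\sum_{i\in I} i \bmod n$, $\Tri$ is completely balanced, so $h^*_d(\hyp)=h_d(\Tri)=\sum_{|S|=d}h_S(\Tri)$. Moreover $\wind(\dosp)=\cdes(w_\dosp^{(p)})$ for every shift $p$.

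The face description you are missing is Lemma~\ref{lem:alcove_to_word}. A $(p-1)$-face is recorded by two pieces of data. The first is the ordered set partition $(A_1,\dots,A_p)$ of $\Z/n\Z$ into the sets of positions moved between consecutive vertices, or equivalently its reading word $w_F$. The second is its lexicographically minimal vertex $I$, subject to $\cDes(w_F)\subseteq I\subseteq \cDes(w_F)\cup\cEq(w_F)$. So each word accounts for $\binom{\ceq(w)}{k-\cdes(w)}$ faces.

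The rest of the argument runs as follows:
\begin{itemize}
\item Summing $f_{S+j}(\Tri)$ over all rotations $j$ gives $\langle h_\alpha,G_{k,n}\rangle$, where $\sum_k G_{k,n}z^k=\tr((\Theta X)^n)$.
\item Hypersimplicial words with $\cDes\subseteq S$ are counted by $\langle h_\alpha,\Hyp_{k,n}\rangle$ via contingency tables.
\item These two counts are matched by the log-determinant identity $k\,G_{k,n}=n\,\Hyp_{k,n}$ (Theorem~\ref{thm:GHyp}).
\item M\"obius inversion then passes from flag $f$-numbers to flag $h$-numbers.
\end{itemize}
So the $\dosp$'s enter through cyclic descent sets on the $h$-side, never as faces.

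To rescue your reciprocity plan, you would have to redo Step~1 with this parametrization. A face is interior exactly when no coordinate is constant on its vertex set. You would then still need an identity of the strength of Theorem~\ref{thm:GHyp} to connect interior faces to $\wind$.
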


\begin{figure}
	\centering
	\begin{tikzpicture}[scale=.8,every node/.style={font=\small}]
	\coordinate (A) at (0,1);
	\coordinate (B) at (2.5,0);
	\coordinate (C) at (4,1);
	\coordinate (D) at (1.5,2);

	\coordinate (Apex) at (2,4);
	\coordinate (Bot) at (2,-2);

	\draw[thick] (A) -- (B) -- (C);
	\draw[thick,dotted] (C) -- (D) --(A);

	\draw[thick] (Apex) -- (A);
	\draw[thick] (Apex) -- (B);
	\draw[thick] (Apex) -- (C);
	\draw[thick,dotted] (Apex) -- (D);
	\draw[thick] (Bot) -- (A);
	\draw[thick] (Bot) -- (B);
	\draw[thick] (Bot) -- (C);
	\draw[thick,dotted] (Bot) -- (D);

	\node[below left] at (A) {0110};
	\node[below right] at (B) {1010};
	\node[above right] at (C) {1001};
	\node[above left] at (D) {0101};

	\node[above] at (Apex) {1100};
	\node[below] at (Bot) {0011};
	\end{tikzpicture}
	\caption{The hypersimplex $ \Delta_{2,4} $.}\label{fig:d24}
\end{figure}

\begin{figure}
	\centering
	\begin{tikzpicture}[scale=.8, every node/.style={fill=white, inner sep=1pt, font=\small},
	arc/.style={thick, ->, >=Stealth}]
	\draw[thick] (0,0) circle (2);
	\node (A) at (90:2) { $ \{1,2,3,4\} $ };
	\node at (0,-2.5) {winding number $ 0 $ };
	\end{tikzpicture}
	\begin{tikzpicture}[scale=.8,every node/.style={fill=white, inner sep=1pt, font=\small},
	arc/.style={thick, ->, >=Stealth}]
	\draw[thick] (0,0) circle (2);
	\node (A) at (270:2) { $ \{1,2\} $ };
	\node (B) at (90:2) { $ \{3,4\} $ };
	\node at (0,-2.5) {winding number $ 1 $ };
	\end{tikzpicture}
	\begin{tikzpicture}[scale=.8,every node/.style={fill=white, inner sep=1pt, font=\small},
	arc/.style={thick, ->, >=Stealth}]
	\draw[thick] (0,0) circle (2);
	\node (A) at (270:2) { $ \{1,4\} $ };
	\node (B) at (90:2) { $ \{2,3\} $ };
	\node at (0,-2.5) {winding number $ 1 $ };
	\end{tikzpicture}
	\begin{tikzpicture}[scale=.8,every node/.style={fill=white, inner sep=1pt, font=\small},
	arc/.style={thick, ->, >=Stealth}]
	\draw[thick] (0,0) circle (2);
	\node (A) at (270:2) { $ \{1,3\} $ };
	\node (B) at (90:2) { $ \{2,4\} $ };
	\node at (0,-2.5) {winding number $ 2 $ };
	\end{tikzpicture}
	\caption{ $ \hDOSP(2,4) $.}\label{fig:dosp24}
\end{figure}

\begin{example}\label{ex:d24}
	The hypersimplex $ \Delta_{2,4} $
	is the octahedron shown in Figure~\ref{fig:d24}.
	The hypersimplicial decorated ordered set partitions of type $ (2,4) $
	are shown in Figure~\ref{fig:dosp24}, and we have
	\[
		h^*(\Delta_{2,4},z) = 1 + 2z + z^2.
	\]
\end{example}

\subsection{Balanced complexes and flag numbers}\label{ss:balanced-flag}
There is a \emph{unimodular triangulation} of the hypersimplex $\hyp$ by \emph{alcoves}, studied by Postnikov and Lam
in~\cite{alcove1}. The \emph{alcove triangulation complex} of $\hyp$, denoted $\Tri$, is the simplicial complex with
vertex set $\binom{[n]}{k}$, and maximal faces called \emph{alcoves} (See Section~\ref{sec:triangulation}). Since
$\Tri$ is unimodular, Stanley's result in~\cite{stanley80} implies that \[h^*_d(\hyp) = h_d(\Tri).\] Our main result gives a refinement of Theorem~\ref{thm:earlykim} by flag numbers introduced by Stanley in the study
   of balanced complexes~\cite{Stanley1979}.

Let $\Delta $ be a simplicial complex on a vertex set $ V $. A face with $ i+1 $ elements is an \emph{ $ i $-face} and
has $ \dim F=i $; if $ \delta=\max\{\dim F:F\in\Delta\} $ then $ \Delta $ is a \emph{$ \delta $-complex} and $
\dim\Delta=\delta $, and $ \Delta $ is \emph{pure} if every maximal face is a $ \delta $-face~\cite[\S1]{Stanley1979}.

\begin{definition}[{\cite[\S2]{Stanley1979}}]\label{def:balanced}
	A \emph{balanced complex} of type $ (a_1,\dots,a_m) $ is a pair $ (\Delta,\pi) $ such that $ \Delta $ is a pure $ \delta $
	-complex on a vertex set $ V $ and $\pi=(V_1,\ldots, V_m)$ is an ordered set partition of $ V $
	such that for every maximal face $ F\in\Delta $ and every $ i\in[m] $, we have $ |F\cap V_i|=a_i $. Hence $
		a_1+\cdots+a_m=\delta+1 $.
	A balanced complex of type $ (1,1,\dots,1) $
	is \emph{completely balanced}.
\end{definition}

\begin{definition}[{\cite[\S3]{Stanley1979}}]\label{def:flag}
	Let $ (\Delta,\pi) $
	be completely balanced with $\pi = (V_1, \ldots,V_m)$.The \emph{color} of a vertex $ x\in V_i $ is $ \col(x)=i $, and the \emph{type} of a face $ F\in\Delta $ is
	\begin{equation}
		\typ(F)=\{\col(x):x\in F\}\subseteq[m].
	\end{equation}
	For $ S\subseteq[m] $ the \emph{flag $ f $-number} and the \emph{flag $ h $-number} of $ \Delta $ are
	\begin{equation}\label{eq:flagnumbers}
		f_S(\Delta)=\#\{F\in\Delta:\typ(F)=S\},\qquad
		h_S(\Delta)=\sum_{T\subseteq S}(-1)^{|S\setminus T|}f_T(\Delta).
	\end{equation}
\end{definition}

\begin{lemma}[Eq.6,\cite{Stanley1979}]
	If $\Delta$ is a completely balanced complex, then
	\begin{equation}\label{eq:flagh}
		h_d(\Delta)=\sum_{|S|=d} h_S(\Delta).
	\end{equation}
\end{lemma}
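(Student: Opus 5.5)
The plan is to expand both sides as polynomials in the ordinary $f$-numbers and compare coefficients. Throughout, $\Delta$ is completely balanced with $m=\delta+1$ colors, and the $h$-vector is defined by
\[
\sum_{d=0}^{m} h_d(\Delta)\, x^{m-d} = \sum_{i=0}^{m} f_{i-1}(\Delta)(x-1)^{m-i}.
\]
Equivalently,
\[
h_d(\Delta)=\sum_{i=0}^{d}(-1)^{d-i}\binom{m-i}{d-i} f_{i-1}(\Delta),
\]
where $f_{i-1}$ counts faces with $i$ vertices.

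The first step is to observe that every face of a completely balanced complex contains at most one vertex of each color. A face $F$ lies in some maximal face $G$ because $\Delta$ is pure, and $|G\cap V_j|=1$ for each $j$. Hence $|\typ(F)|=|F|$. Summing over types then gives
\[
f_{i-1}(\Delta)=\sum_{|T|=i} f_T(\Delta).
\]

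The second step is to compute the right-hand side of \eqref{eq:flagh} from \eqref{eq:flagnumbers} by exchanging the order of summation:
\[
\sum_{|S|=d} h_S(\Delta)=\sum_{|S|=d}\sum_{T\subseteq S}(-1)^{|S\setminus T|}f_T(\Delta)
=\sum_{T\subseteq[m],\,|T|\le d}(-1)^{d-|T|}f_T(\Delta)\cdot\#\{S\supseteq T:|S|=d\}.
\]
For a fixed $T$ with $|T|=i$, the number of $d$-subsets $S$ of $[m]$ containing $T$ is $\binom{m-i}{d-i}$. Grouping the sum by $i=|T|$ and applying the first step gives
\[
\sum_{i=0}^{d}(-1)^{d-i}\binom{m-i}{d-i}f_{i-1}(\Delta).
\]
This is exactly $h_d(\Delta)$.

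The only point needing care is matching the normalization of $h_d$ to the standard definition. The binomial expansion $(x-1)^{m-i}=\sum_j \binom{m-i}{j}x^{m-i-j}(-1)^j$ shows that the coefficient of $x^{m-d}$ is $\sum_i(-1)^{d-i}\binom{m-i}{d-i}f_{i-1}$, which confirms the formula above. The empty face is also consistent: it contributes $f_\emptyset(\Delta)=1=f_{-1}(\Delta)$ on both sides. No genuine obstacle arises beyond the observation in the first step that faces are rainbow.
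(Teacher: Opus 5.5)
Your proof is correct. Every face lies in a maximal face that meets each color class exactly once; this needs only finiteness, not purity as you suggest. So faces are rainbow and $f_{i-1}(\Delta)=\sum_{|T|=i}f_T(\Delta)$, and exchanging the sums with the superset count $\binom{m-i}{d-i}$ gives the standard expression for $h_d(\Delta)$. The paper gives no proof of its own, only the citation to Stanley's Eq.~(6), and your computation is the standard derivation behind that reference.
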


\subsection{Cyclic compositions and cyclic words}
A \emph{(resp.\ weak) composition} of $n$ with $p$ parts is a vector $(\alpha_1,\dots,\alpha_p)$ of (resp.\
nonnegative) positive integers such that $\sum_{i=0}^{p-1}\alpha_i = n$, denoted $\alpha \models n$. There is an
equivalence relation on (weak) compositions of $ n $ with $ p $ parts, induced by
\[
	(\alpha_1,\dots,\alpha_p) \sim \alpha^{(i)}:= (\alpha_i,\dots,\alpha_p,\alpha_1\dots,\alpha_{i-1}).
\]
The equivalence class under this relation is called a \emph{cyclic composition}, denoted $ [\alpha] $.

\begin{definition}
	For each subset $ S \subseteq [n] $ and for each $i \in S$, we associate a cyclic composition, denoted
	$\comp^{(i)}(S)$. We draw the points in $S$ clockwise on a circle, so that they cut the circle into $ |S| $ arcs. Let
	$\comp^{(i)}(S)_j$ be the arc length of the $j$-th arc clockwise after $i$. These arc lengths form a cyclic composition
	of $n$. The set $\{\comp^{(i)}(S): i \in S\}$ forms an equivalence class of compositions, which we denote by $\ccomp(S)$.
	Conventionally, let $\comp(S):= \comp^{\min S}(S)$.
\end{definition}

\begin{figure}
	\centering
	\begin{tikzpicture}[scale=1.35]
	\draw[gray!30] (0,0) circle (1);
	\foreach \a in {90, 5, -85, 165}{
			\node[circle,fill=black,inner sep=1.8pt] at (\a:1) {}; }
	\node[font=\small,anchor=south] at (90:1.12) { $ c=s_1 $ };
	\node[font=\small,anchor=west] at (5:1.13) { $ s_2 $ };
	\node[font=\small,anchor=north] at (-85:1.13) { $ s_3 $ };
	\node[font=\small,anchor=east] at (165:1.13) { $ s_4 $ };
	\draw[->,thick] (84:1.12) arc (84:11:1.12);
	\node[font=\small] at (48:1.30) { $ \alpha_1 $ };
	\draw[->,thick] (-1:1.12) arc (-1:-79:1.12);
	\node[font=\small] at (-40:1.30) { $ \alpha_2 $ };
	\draw[->,thick] (-91:1.12) arc (-91:-189:1.12);
	\node[font=\small] at (-140:1.32) { $ \alpha_3 $ };
	\draw[->,thick] (159:1.12) arc (159:96:1.12);
	\node[font=\small] at (128:1.32) { $ \alpha_4 $ };
	\end{tikzpicture}
	\caption{Reading the gaps of $
			S=\{s_1<s_2<s_3<s_4\}\subseteq\Z/n\Z $
: walking clockwise from $ c=s_1 $, the points of $ S $ appear in the order $ s_1,s_2,s_3,s_4 $, and $ \alpha_i $
		is the number of steps of the $ i $-th arc, giving $
			\comp^{c}(S)=(\alpha_1,\alpha_2,\alpha_3,\alpha_4)$.}\label{fig:comp}
\end{figure}

Let $ w:\Z/n\Z \to [k] $ be a word of length $n$ and alphabet $[k]$ for some positive integer $k$.
Following~\cite{magicnumber}, we define the \emph{cyclic descents} of $w$ as
\begin{equation}\label{eq:cdes}
	\cDes(w):=\{j\in\Z/n\Z:w(j)>w(j+1)\},\qquad \cdes(w) = |\cDes(w)|
\end{equation}
The \emph{content} of $ w $
is the \emph{weak} composition
\begin{equation}
	\content(w):= (|w^{-1}(1)|,\dots,|w^{-1}(k)|).
\end{equation}

\begin{definition}\label{def:hypersimplicial}
	Let $\alpha$ be a weak composition with $k$ parts.
	Viewing $\alpha: \Z/k\Z \to \Z_{\geq 0}$, we say that $\alpha \models n$ is \emph{hypersimplicial} if every positive entry satisfies $ \alpha_i > 1 $ and is
	followed by at most $ \alpha_i - 1 $ zeros on the circle.
	A cyclic word is \emph{hypersimplicial} if it has
	hypersimplicial content.
\end{definition}
Let $w$ be a cyclic word $\Z/n\Z \to [k]$. For $ p \in \Z/k\Z $, define the \emph{$p$-shift} of $ w $ as the cyclic word
\begin{equation}\label{eq:pshift}
	w^{(p)}(i) = w(i) - p \mod k.
\end{equation}

\begin{definition}\label{def:reading_word}
	The \emph{reading word} of an ordered set partition$(S_1,\dots,S_p)$ is the cyclic word $ w $ defined by
	\begin{equation}\label{eq:reading-osp}
		w(i)= j \text{ if } i \in S_j.
	\end{equation}
\end{definition}
For a decorated ordered set partition in the form~\eqref{eq:dosp}, choose the representative such that $n \in S_1$, and
define $w_{\dosp}$ as the reading word of this representative. Let $\cDes(\dosp):= \cDes(w_{\dosp})$.

\subsection{Main results}\label{ss:main_results}
Our first main result is a refinement of Early--Kim's theorem by giving a combinatorial formula for Stanley's flag $ h
$-numbers of the alcove triangulation complex.
\begin{theorem}\label{thm:hmain}
	Let $ \Tri $ be the triangulation complex of $ \hyp $.
	Then, for each cyclic composition $ \alpha $, we have
	\begin{equation}
		\sum_{S: \ccomp(S) = [\alpha]} h_S(\Tri) = \left| \left\{ \dosp \in \hDOSP(k,n): \ccomp(\cDes(\dosp)) = [\alpha] \right\}\right|.
	\end{equation}
\end{theorem}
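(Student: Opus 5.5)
The plan is to prove an $f$-number version of the identity and then pass to flag $h$-numbers by Möbius inversion on subsets of $\Z/n\Z$. The cyclic group $\Z/n\Z$ acts on $\hyp$ by rotating coordinates, and this action should be compatible with both sides of Theorem~\ref{thm:hmain}. I will use the description of alcoves in Section~\ref{sec:triangulation}. Each alcove is a cyclic chain of $n$ vertices $v_0,v_1,\dots,v_{n-1}$ in $\binom{[n]}{k}$, each obtained from the previous one by a move $-e_a+e_{a+1}$ (indices mod $n$). I will take the coloring $\col(v)=\sum_{i\in v} i \bmod n$, so that $\Tri$ is completely balanced with colors $\Z/n\Z$; the first step is to check this against Section~\ref{sec:triangulation}. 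Rotating coordinates then shifts every color by $k$. Since the statement only involves $\ccomp(S)$, it is invariant under rotating $S$, so it suffices to work with rotation classes of types.

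\textbf{Step 1 (faces of a given type).} For $T\subseteq \Z/n\Z$ with $\comp(T)=(\beta_1,\dots,\beta_q)$, I will describe the faces of type $T$. Such a face is a subchain of an alcove, and consecutive chosen vertices differ by a composite move. I expect these faces to be in bijection with cyclic words $w:\Z/n\Z\to[q]$, or equivalently ordered set partitions $(B_1,\dots,B_q)$, whose blocks and decorations are read off from the gaps $\beta_j$, subject to a hypersimplicial-type condition on the content (Definition~\ref{def:hypersimplicial}). This is where the Young permutation modules enter: $f_T$ should equal the number of such words, which is the dimension of a permutation module, restricted by the hypersimplicial condition.

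\textbf{Step 2 (descent-compatible encoding).} I will construct a map from faces of type $T$ to pairs $(\dosp,\text{shift})$, where $\dosp\in\hDOSP(k,n)$ and a rotation places $\cDes(\dosp)$ inside $T$. Concretely, a face of type $T$ is refined to an alcove by filling in the missing colors in the canonical way (cf.\ the $p$-shift \eqref{eq:pshift}). The alcove gives a DOSP via Kim's bijection underlying Theorem~\ref{thm:earlykim}. The target identity is
\[
\sum_{\ccomp(T)=[\beta]} f_T(\Tri)=\sum_{\ccomp(T)=[\beta]}\#\{\dosp\in\hDOSP(k,n):\ \text{some rotation of }\cDes(\dosp)\text{ lies in }T\},
\]
with rotations counted with the correct multiplicity. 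A cleaner way to organize this is to prove $f_T=\#\{\dosp: \cDes(\dosp)+c\subseteq T\}$ for a fixed shift $c$ depending only on a choice of base point. I will then check that the ambiguity in $c$, caused by the normalization $n\in S_1$ in defining $w_\dosp$, disappears after summing over the rotation class.

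\textbf{Step 3 (inversion).} Assume Step 2 gives, for each $T$, $f_T=\sum_{U\subseteq T} g_U$, where $g_U$ counts DOSPs whose (suitably shifted) cyclic descent set equals $U$. Then the definition \eqref{eq:flagnumbers} and Möbius inversion on the Boolean lattice give $h_S=g_S$. Summing over $S$ with $\ccomp(S)=[\alpha]$ then yields Theorem~\ref{thm:hmain}. As a sanity check, summing over $|S|=d$ with \eqref{eq:flagh} should recover Theorem~\ref{thm:earlykim}. This requires $|\cDes(\dosp)|=\wind(\dosp)$, or the analogous statistic, and this equality is itself a check on the encoding.

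The main obstacle is Step 2: making the face-to-DOSP correspondence respect the descent sets exactly, as opposed to only respecting their cardinalities. I would first try to verify it on $\Delta_{2,4}$ (Example~\ref{ex:d24}), computing all $f_T$ directly. That computation should fix the right color convention and base-point shift before I attempt the general argument via gap lengths.
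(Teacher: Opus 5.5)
Your proposal has a genuine gap at Step 2. You never construct the face-to-DOSP correspondence, and the ``cleaner'' pointwise identity you propose is false.

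\textbf{The pointwise identity fails in $\Delta_{2,4}$.} You propose $f_T(\Tri)=\#\{\dosp:\cDes(\dosp)+c\subseteq T\}$, which would give $h_S(\Tri)=g_S$ in Step 3. With $\col(I)=\sum_{i\in I} i \bmod 4$:
\begin{itemize}
\item vertices $14,23$ have color $1$, vertices $12,34$ have color $3$, vertex $13$ has color $4$, and vertex $24$ has color $2$;
\item hence $h_{\{1\}}(\Tri)=h_{\{3\}}(\Tri)=1$ and $h_{\{2\}}(\Tri)=h_{\{4\}}(\Tri)=0$;
\item the two DOSPs of winding number $1$ have reading words $2211$ and $1221$, so their cyclic descent sets are $\{2\}$ and $\{3\}$.
\end{itemize}
For any shift $c$, the sets $\{2+c\}$ and $\{3+c\}$ have opposite parity, so they cannot be $\{1\}$ and $\{3\}$. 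Both the $f$-version and the $h$-version fail for every $c$.

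This is structural, not accidental. $f_S(\Tri)$ is invariant under $S\mapsto S+k$ but not under arbitrary rotations; here $f_{\{1\}}=2\neq 1=f_{\{2\}}$. The theorem holds only after summing over a whole $\Z/n\Z$-orbit of types, so no type-by-type bijection of the kind you seek exists.

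\textbf{Step 1 is also off.} A face is not encoded by a word with a hypersimplicial restriction. By Lemma~\ref{lem:alcove_to_word}, it corresponds to a pair consisting of:
\begin{itemize}
\item a word $w$ whose content is the gap composition of its type, read from the base color;
\item a base vertex $I$ with $\cDes(w)\subseteq I\subseteq \cDes(w)\cup\cEq(w)$.
\end{itemize}
So faces are counted with weight $\binom{\ceq(w)}{k-\cdes(w)}$, and $f_T$ is not the dimension of a permutation module.

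\textbf{The missing idea} is the non-bijective identity that turns these binomially weighted words into hypersimplicial words. The paper proceeds as follows.
\begin{itemize}
\item Summing over the orbit gives $\sum_{j}f_{T+j}(\Tri)=\langle h_\beta,G_{k,n}\rangle$. This uses Lemma~\ref{lem:alphap} and the fact that the weighted count depends only on the multiset of parts.
\item It proves $G_{k,n}=\frac{n}{k}\Hyp_{k,n}$ by a transfer-matrix computation: $\sum_k G_{k,n}z^k=\tr((\Theta X)^n)$, the principal minors of $\Theta$ equal $[j+1]_z$, and one expands $-\log\det(I-t\Theta X)$.
\item $\langle h_\beta,\Hyp_{k,n}\rangle=|W_T|$ counts hypersimplicial words with $\cDes\subseteq T$, via contingency tables.
\item These words are identified with $\hDOSP(k,n)\times\Z/k\Z$ through the $p$-shift.
\end{itemize}
The factors $n$ and $k$ cancel only at the level of orbits. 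For the same reason, your M\"obius inversion must be done on rotation orbits, which are indexed by cyclic compositions, not on individual subsets. Without this generating-function step or an equivalent, Steps 2 and 3 do not constitute a proof.
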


Our second result gives a representation theoretic interpretation of the flag $ f $-numbers. Let $ \alpha =
(\alpha_1,\dots,\alpha_r) $ be a weak composition of $ n $. The \emph{Young subgroup} is $ S_\alpha \cong S_{\alpha_1}
\times \cdots \times S_{\alpha_r} $, and the \emph{Young permutation module} of $\alpha$ is $ M^\alpha \cong
\Ind_{S_\alpha}^{S_n} \triv $, the $ S_n $ -module on the ordered set partitions $ (B_1,\dots,B_r) $ of $ [n] $ with $
|B_i| = \alpha_i $. The Frobenius character of $ M^\alpha $ is $ \ch M^{\alpha} = h_\alpha $, the complete homogeneous
symmetric function. Let
\begin{equation}
	\label{eq:Mkn} \M_{k,n} = \bigoplus_{\lambda \text{ hypersimplicial}} M^\lambda.
\end{equation}
\begin{theorem}\label{thm:repmain}
	Let $ \gcd(k,n)=1 $ and let $ S $
	be a nonempty subset of $ [n] $ with $\ccomp(S) = [\alpha]$. Then
	\begin{equation}
		k \, f_S(\Tri) = \dim \Hom_{S_n}(M^{\alpha}, \M_{k,n})
	\end{equation}
\end{theorem}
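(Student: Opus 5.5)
The strategy is to compute both sides as counts of integer matrices and match them up to a free action of $\Z/k\Z$.

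\emph{Coloring and right-hand side.} First I would fix the completely balanced structure on $\Tri$. The color of a vertex $I\in\binom{[n]}{k}$ is $\col(I)=\sum_{i\in I} i \bmod n$. In a Lam--Postnikov alcove, consecutive vertices differ by a move $e_{j}\mapsto e_{j+1}$, so the color increases by exactly $1$. Hence each alcove uses each color in $\Z/n\Z$ once, and the colors of a face of type $S$ are the points of $S$.

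On the representation side I would use Mackey's formula. It gives $\dim\Hom_{S_n}(M^\alpha,M^\lambda)$ as the number of double cosets $S_\alpha\backslash S_n/S_\lambda$. This equals the number of nonnegative integer matrices with row sums $\alpha$ and column sums $\lambda$. So the right-hand side counts pairs $(\lambda,A)$, where $\lambda$ is a hypersimplicial weak composition of $n$ with $k$ parts and $A$ is such a matrix.

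\emph{The factor $k$.} The group $\Z/k\Z$ acts on these pairs by simultaneously rotating $\lambda$ cyclically and permuting the columns of $A$; on contents of words this is the $p$-shift~\eqref{eq:pshift}. The hypersimplicial condition of Definition~\ref{def:hypersimplicial} is cyclically invariant, so the action is well defined.

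I would show the action is free when $\gcd(k,n)=1$. If a shift by $p\neq 0$ fixed $\lambda$, then $\lambda$ would be periodic with period $d=\gcd(p,k)<k$. This would force $k/d$ to divide $n=\sum\lambda_i$. Since $k/d>1$ divides $k$ and $\gcd(k,n)=1$, this is impossible. So the right-hand side equals $k$ times the number of orbits, and it remains to show that $f_S(\Tri)$ equals the number of orbits.

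\emph{Faces as orbits.} Write $\comp(S)=(\alpha_1,\dots,\alpha_r)$. A face of type $S$ is a cyclic sequence $I_1,\dots,I_r$ of vertices of one alcove, with $\col(I_{j+1})-\col(I_j)=\alpha_j$. Going around the alcove between $I_j$ and $I_{j+1}$ performs $\alpha_j$ elementary moves. I would encode these moves as follows.
\begin{itemize}
\item Label the $k$ ``particles'' (the elements of the subsets) cyclically by $\Z/k\Z$. Along a full turn of the alcove, every particle advances by exactly $n$ in total.
\item Let $A_{j,c}$ be the number of moves made by particle $c$ between $I_j$ and $I_{j+1}$. Then row sums are $\alpha_j$.
\item Let $\lambda_c$ record the gap data of the base vertex, so that column sums are $\lambda_c$.
\item The choice of which particle is called ``$1$'' is exactly the $\Z/k\Z$ ambiguity.
\end{itemize}
The inequality $\lambda_c>1$, together with the condition that a positive entry is followed by at most $\lambda_c-1$ zeros, should translate precisely into the requirement that the particles never collide, i.e.\ that all intermediate points stay in $[0,1]^n$. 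This is the alcove condition that successive $0/1$-vectors differ by the standard sorted moves.

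\emph{Main obstacle.} The expected difficulty is making this encoding a genuine bijection. Concretely, I would need to show that every face of type $S$ lies in alcoves whose restriction to $S$ determines the same matrix. I would also need to show that, conversely, every hypersimplicial pair $(\lambda,A)$ is realized by exactly one face. The most delicate part is matching the ``at most $\lambda_c-1$ zeros'' condition with non-collision of particles.

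My first approach would be to pass through the cyclic reading words of Definition~\ref{def:reading_word}. I would use the known description of alcoves of $\hyp$ by cyclic words/permutations, and check the bijection directly by contracting the colors not in $S$ (merging consecutive moves). I would verify the construction against Example~\ref{ex:d24}, since $\Delta_{2,4}$ is small enough to enumerate faces by hand.
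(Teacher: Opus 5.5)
\textbf{There is a genuine gap: the equality between $f_S(\Tri)$ and the number of orbits is never proved, and the encoding you sketch cannot prove it.}

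Your treatment of the right-hand side is sound. By Mackey, $\dim\Hom_{S_n}(M^\alpha,\M_{k,n})$ counts pairs $(\lambda,A)$, where $\lambda$ is hypersimplicial and $A$ is a contingency table with row sums $\alpha$ and column sums $\lambda$. Your periodicity argument correctly shows that $\Z/k\Z$ acts freely by rotating columns when $\gcd(k,n)=1$. But the claim that $f_S(\Tri)$ equals the number of orbits is the whole content of the theorem.

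\emph{Column sums.} In one turn of an alcove, each particle advances exactly to the starting point of the next particle; only the total advance is $n$. So the column sums of your move matrix are the gap composition of the base vertex: $k$ positive parts, possibly equal to $1$. Hypersimplicial $\lambda$ have no parts equal to $1$, and they have zero parts unless every decoration is $1$. For example, take $\Delta_{2,3}$ and a one-element $S$. The unique face is a single vertex such as $\{1,2\}$, giving column sums $(1,2)$ or $(2,1)$. The hypersimplicial $\lambda$ are $(3,0)$ and $(0,3)$.

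\emph{Non-collision.} Non-collision is not a condition on column sums at all. It says that, cyclically in $c$, the last nonzero row of column $c$ is at least the first nonzero row of column $c+1$. This is the condition $\base(F)\subseteq\cDes(w_F)\cup\cEq(w_F)$ of Lemma~\ref{lem:alcove_to_word}. So the ``precise translation'' you hope for, between non-collision and the zero-run condition, does not exist.

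\emph{Coprimality.} Your face-side correspondence never uses $\gcd(k,n)=1$, yet it must. In $\Delta_{2,4}$ there is one vertex of color $0$ (namely $\{1,3\}$) but two of color $1$ ($\{1,4\}$ and $\{2,3\}$). Both singletons have $\alpha=(4)$, and there are $2$ orbits, so no coprimality-free bijection to orbits can hold.

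\emph{What your encoding actually gives.} Use coprimality to recover the base vertex from its color: translating $I$ by $t$ changes $\col(I)$ by $kt$. Then your particle encoding yields
\[
k\,f_S(\Tri)=\#\{\text{nonnegative integer matrices with row sums }\alpha\text{, positive column sums, and the cyclic interleaving condition}\}.
\]
Identifying this count with the number of hypersimplicial contingency tables is equivalent to $k\langle h_\alpha,G_{k,n}\rangle=n\langle h_\alpha,\Hyp_{k,n}\rangle$, i.e.\ Theorem~\ref{thm:GHyp}.

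That identity is the real input. The paper proves it with the transfer matrix: $\sum_k G_{k,n}z^k=\tr((\Theta X)^n)$, the evaluation of $\det(I-t\Theta X)$ in Lemma~\ref{lem:detI-t}, and extraction of $[z^k]$ from the logarithm. It then combines this with Theorem~\ref{thm:ftoG}, which gives $\sum_j f_{S+j}(\Tri)=\langle h_\alpha,G_{k,n}\rangle$, and Corollary~\ref{cor:gcd}, which makes $f_{S+j}$ independent of $j$ when $\gcd(k,n)=1$. Your proposal has no substitute for this step.
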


\subsection*{Organization}
In Section~\ref{sec:triangulation}, we introduce the alcoved triangulation of the hypersimplices. In
Section~\ref{sec:symm}, we prove symmetric function identities for the generating functions of the statistics we are
interested in. In Section~\ref{sec:hypwords}, we prove our main result.
\subsection*{Acknowledgements}
We thank Nick Early, Elisabeth Bullock, Donghyun Kim, Lauren Williams, and Sylvie Corteel for helpful discussion.

\section{The alcove triangulation}\label{sec:triangulation}

We introduce the alcove triangulation of the hypersimplex. Lam and Postnikov~\cite{alcove1} showed that the maximal
faces of $ \Tri $ are parametrized by minimal \emph{circuits}. Parisi, Sherman-Bennett, Tessler, and
Williams~\cite{magicnumber} showed that the circuits can be described as the cyclic descents set of permutations.

\begin{definition}[\cite{alcove1}]
	A sequence $ (I_1,\dots,I_n) $ of $ k $ -subsets of $ [n] $
	is a \emph{circuit} if for each $ i \in \Z/n\Z $, there exists $ j \in \Z/n\Z $ such that $
		I_{i+1} = I_i \setminus \{j\} \cup \{j+1\} $.
\end{definition}

\begin{theorem}[\cite{alcove1,magicnumber}]\label{thm:max_face}
	If $ (I_0,\dots,I_{n-1}) $
	is a maximal face in $ \Tri $
, then there exists a permutation $ w \in S_n $ with $ \cdes(w) = k $ such that  for any $p \in \Z/n\Z$, we have
	\begin{equation}\label{eq:circuit}
		I_{p} = I_{p-1} \setminus \{w^{-1}(p)-1\} \cup \{w^{-1}(p)\}
	\end{equation}
	for all $p \in \Z/n\Z$. Moreover, we may take $w(n)=1$ and
	\begin{equation}\label{eq:cDescirc}
		I_p = \cDes\left(w^{(p)}\right).
	\end{equation}
\end{theorem}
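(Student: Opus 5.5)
The plan is to read off the permutation $w$ directly from the circuit attached to the maximal face, and then check \eqref{eq:cDescirc} by tracking when each element enters and leaves the sets $I_p$.

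\textbf{Step 1: the circuit gives a bijection.} By Lam--Postnikov~\cite{alcove1}, every maximal face $(I_0,\dots,I_{n-1})$ of $\Tri$ is a \emph{minimal} circuit. For each $p\in\Z/n\Z$ the step $I_{p-1}\to I_p$ replaces some element $j_p-1$ by $j_p$. The key input I would extract from \cite{alcove1} is that minimality means each $j\in\Z/n\Z$ occurs exactly once as some $j_p$. The reason is that after one full turn the total displacement must be trivial, and a minimal (length-$n$) circuit moves every "slot" exactly once. So $p\mapsto j_p$ is a bijection of $\Z/n\Z$. Define $w\in S_n$ by $w^{-1}(p)=j_p$, identifying $0$ with $n$. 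Then \eqref{eq:circuit} holds by construction.

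\textbf{Step 2: normalise $w(n)=1$.} Cyclically re-indexing the circuit does not change the face. Replacing $I_p$ by $I_{p+c}$ changes $w$ to $w^{(c)}$, with values taken mod $n$ in $[n]$. Choosing $c$ suitably, we may assume $w(n)=1$.

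\textbf{Step 3: membership in $I_p$.} This is the heart of the argument. Fix $j\in\Z/n\Z$. By \eqref{eq:circuit}, $j$ enters the circuit exactly once, at step $p=w(j)$, when $j-1$ is replaced by $j$. It leaves exactly once, at step $p=w(j+1)$, when $j$ is replaced by $j+1$. Both events are unique by the bijectivity from Step 1. Hence $j\in I_p$ if and only if $p$ lies in the cyclic half-open interval from $w(j)$ up to, but not including, $w(j+1)$.

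Now subtract $p$ mod $n$ from every value, as in the $p$-shift. The condition becomes: $0\equiv n$ lies in the interval from $w^{(p)}(j)$ to $w^{(p)}(j+1)$. For values in $[n]$, this says exactly that $w^{(p)}(j)>w^{(p)}(j+1)$, with a boundary check when $w(j)=p$. Therefore $I_p=\cDes(w^{(p)})$. Taking $p=0$ gives $k=|I_0|=\cdes(w)$.

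As a sanity check on the recurrence: passing from $w^{(p-1)}$ to $w^{(p)}$ moves the value at position $w^{-1}(p)$ from the minimum to the maximum. This deletes the descent at $w^{-1}(p)-1$ and creates one at $w^{-1}(p)$, consistent with \eqref{eq:circuit}.

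\textbf{Expected obstacle.} The main difficulty is Step 1. I need to state precisely the form of Lam--Postnikov's minimality condition, namely that each $j$ is used exactly once in a minimal circuit, and then align the indexing conventions: the $\pm1$ in $w^{-1}(p)-1$ versus $w^{-1}(p)$, and the boundary case $w(j)=p$ in Step 3. I would settle these conventions by testing the construction against $\Delta_{2,4}$ from Example~\ref{ex:d24} before writing the general argument.
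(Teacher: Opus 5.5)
Your argument is correct. The paper gives no proof of this statement: it is quoted from \cite{alcove1,magicnumber}, so your proposal is a self-contained derivation where the paper has only a citation. Your route isolates the one genuinely external input, Lam--Postnikov's identification of the maximal faces of $\Tri$ with circuits $(I_0,\dots,I_{n-1})$ of $n$ distinct $k$-subsets. From that alone it obtains both the permutation $w$ and the cyclic-descent formula \eqref{eq:cDescirc} by elementary entry/exit bookkeeping. The citation buys brevity, but it leaves the reader to reconcile the indexing conventions (the $\pm1$ in \eqref{eq:circuit} and the choice of normalization); your Step~3 settles these directly.

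Two points would tighten the write-up. First, Step~1 needs no further ``minimality'' property from \cite{alcove1}, and ``the total displacement is trivial'' is not the right invariant, since the aggregate displacement is exactly one lap. What is conserved is the occupancy of each site.
\begin{itemize}
\item Since the $n$ vertices are distinct, every step is a genuine move, i.e.\ $j_p-1\in I_{p-1}$ and $j_p\notin I_{p-1}$; otherwise $|I_p|\neq k$ or $I_p=I_{p-1}$.
\item A fixed $j$ is added exactly at the steps with $j_p=j$ and removed exactly at the steps with $j_p=j+1$.
\item Additions and removals of $j$ alternate around the closed circuit, so $\#\{p: j_p=j\}=\#\{p: j_p=j+1\}$ is independent of $j$.
\item These counts sum to $n$, so each equals $1$, which is the bijectivity you need.
\end{itemize}

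Second, Step~3 proves \eqref{eq:cDescirc} for \emph{every} $w$ satisfying \eqref{eq:circuit}. So the condition $w(n)=1$ is purely a choice of starting vertex, achieved by your re-indexing in Step~2. This is consistent with Table~\ref{tab:d24}: its permutations are normalized by $w(1)=4$ rather than $w(4)=1$, and they still satisfy \eqref{eq:cDescirc}.
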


\begin{table}
	\begin{tabular}{|c|c|}
		\hline
		$w$  & $(I_0,\dots,I_3)$ \\
		\hline
		4312 & (12,13,14,24)     \\
		\hline
		4213 & (12,13,23,24)     \\
		\hline
		4231 & (13,14,24,34)     \\
		\hline
		4132 & (13,23,24,34)     \\
		\hline
	\end{tabular}
	\caption{The permutations and vertex sets indexing simplices in the hypersimplex $\Delta_{2,4}$.}\label{tab:d24}
\end{table}

\begin{example}\label{ex:maxface}
	In Figure~\ref{fig:d24}, the four simplices triangulating the hypersimplex $\Delta_{2,4}$ are shown in Table~\ref{tab:d24}.
	Take the second row in that table where $w^{(0)} = 4213$ with $\cDes(w^{(0)}) = \{1,2\}$. We have
	\begin{align*}
		w^{(1)} & = 3142, & \cDes(w^{(1)}) = \{1,3\}, \\
		w^{(2)} & = 2431, & \cDes(w^{(2)}) = \{2,3\}, \\
		w^{(3)} & = 1324, & \cDes(w^{(3)}) = \{2,4\}.
	\end{align*}
\end{example}

\begin{theorem}\label{thm:coloring}
	The alcove triangulation complex $ \Tri $ of $ \hyp $
	is completely balanced with respect to the coloring $
		\col(I):\equiv \sum_{i \in I} i \bmod n$.
\end{theorem}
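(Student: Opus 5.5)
We need to prove Theorem~\ref{thm:coloring}: every maximal face of $\Tri$ contains exactly one vertex of each color $c \in \Z/n\Z$. Here $\col(I)=\sum_{i\in I} i \bmod n$, so there are $n$ colors. A maximal face has $n$ vertices, since $\dim \hyp = n-1$. So it suffices to show that the $n$ vertices of each alcove have pairwise distinct colors modulo $n$. Purity of $\Tri$ is standard, because it triangulates a polytope.

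The plan is to use the circuit description of Theorem~\ref{thm:max_face}. Write the alcove as $(I_0,\dots,I_{n-1})$ with
\[
I_p = I_{p-1}\setminus\{w^{-1}(p)-1\}\cup\{w^{-1}(p)\},
\]
where indices are read in $\Z/n\Z$. Passing from $I_{p-1}$ to $I_p$ replaces one element $j-1$ by $j$. Working with representatives in $\Z/n\Z$, the sum of elements then increases by exactly $1$ modulo $n$. This holds even in the wraparound case, where we replace $n$ (that is, $0$) by $1$: the sum changes by $1-n \equiv 1 \pmod n$.

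Here we should check that the coloring uses residues, so that the element $n$ counts as $0$. This makes the step uniform, and $\col$ is well defined regardless of the representative.

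Consequently $\col(I_p) \equiv \col(I_0) + p \pmod n$ for $p=0,\dots,n-1$. These are $n$ distinct residues, so every color class $V_c=\{I : \col(I)\equiv c\}$ meets each maximal face in exactly one vertex. Together with purity, this is exactly Definition~\ref{def:balanced} with type $(1,\dots,1)$.

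The only mild obstacle is making sure the swapped element $w^{-1}(p)-1$ actually lies in $I_{p-1}$ and $w^{-1}(p)$ does not, so that the step is a genuine exchange. This is part of Theorem~\ref{thm:max_face} and the definition of circuit, so it can be taken as given. One also notes that all $n$ vertices of an alcove are distinct, which follows from the distinct colors themselves.
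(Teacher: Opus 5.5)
Your proof is correct and follows the paper's argument: along the circuit $(I_0,\dots,I_{n-1})$ from Theorem~\ref{thm:max_face}, each step exchanges $j-1$ for $j$, so $\col(I_p)\equiv\col(I_{p-1})+1 \pmod n$, and the $n$ vertices of an alcove receive $n$ distinct colors. You also spell out the wraparound step ($n \mapsto 1$) and purity, which the paper leaves implicit.
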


\begin{proof}
	Suppose $F$ is a maximal face with circuits $(I_0,\dots,I_{n-1})$, then \[\col(I_{p}) \equiv \col(I_{p-1}) + 1 \bmod n, \qquad \text{ for any }p \in \Z/n\Z, \] so all the vertices of $F$ are of different colors, and each color appears only once. Any lower dimensional face is
    contained in a maximal face, and this property is inherited.
\end{proof}

Let $ \Z/n\Z $ acts on each $ S \subseteq [n] $ by
\begin{equation}
	S + j:= \{s + j \bmod n \colon s \in S\}, \qquad \text{ for each } j \in \Z/n\Z.
\end{equation}
\begin{proposition}
	Let $d = \gcd(k,n)$. For every $S \subseteq \Z/n\Z$, the flag numbers satisfy $f_S(\Tri) = f_{S+k}(\Tri)$ and $h_S(\Tri) = h_{S+k}(\Tri)$.
\end{proposition}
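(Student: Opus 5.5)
The strategy is to find an automorphism of the complex $\Tri$ that permutes the colors by adding $k$, that is, sends color class $c$ to color class $c+k$. Here a subset $S\subseteq\Z/n\Z$ is regarded as a set of colors, so $f_S$ counts faces whose vertices have colors exactly $S$. The natural candidate is the cyclic rotation $\rho\colon I \mapsto I+1$ on $k$-subsets of $\Z/n\Z$. Rotating coordinates cyclically preserves $\hyp$ and the alcove structure.

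First, check that $\rho$ is a simplicial automorphism of $\Tri$. By the Lam--Postnikov description, maximal faces are circuits $(I_1,\dots,I_n)$ with $I_{i+1}=I_i\setminus\{j\}\cup\{j+1\}$. Applying $\rho$ to every term gives $(I_1+1,\dots,I_n+1)$, which satisfies the same condition with $j$ replaced by $j+1$. So $\rho$ sends maximal faces to maximal faces. Since $\rho$ is a bijection on vertices with inverse $I\mapsto I-1$, it is an automorphism of the whole complex. If one prefers to use Theorem~\ref{thm:max_face}, the same conclusion follows: cyclically rotating the permutation word $w$ preserves $\cdes(w)=k$, and replacing $w$ by $w\circ(\text{shift})$ shifts each $\cDes(w^{(p)})$ by one.

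Second, compute the effect on colors. By Theorem~\ref{thm:coloring}, $\col(I)\equiv\sum_{i\in I} i \pmod n$, so
\[
\col(I+1)\equiv\sum_{i\in I}(i+1)=\col(I)+k \pmod n.
\]
Hence for any face $F$ we have $\typ(\rho F)=\typ(F)+k$. One point needs care: the labels $1,\dots,n$ versus residues mod $n$. Rotation takes $n\mapsto 1$, which is consistent when $n$ is read as $0$ in $\Z/n\Z$, and the color is taken mod $n$.

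Finally, $\rho$ restricts to a bijection from $\{F:\typ(F)=S\}$ to $\{F:\typ(F)=S+k\}$, so $f_S(\Tri)=f_{S+k}(\Tri)$. For the $h$-numbers, the map $T\mapsto T+k$ is a bijection from subsets of $S$ to subsets of $S+k$ that preserves $|S\setminus T|$. Substituting into the definition~\eqref{eq:flagnumbers} then gives $h_S(\Tri)=h_{S+k}(\Tri)$.

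The only real obstacle is confirming that rotation preserves the alcove triangulation, i.e.\ the circuit condition. This is immediate from the circuit definition once indices are handled mod $n$. The integer $d=\gcd(k,n)$ plays no role in the argument; it only records that the orbits of $S\mapsto S+k$ are the same as the orbits of translation by $d$.
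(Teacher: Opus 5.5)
Your proof is correct and is essentially the paper's argument: the paper also uses the cyclic rotation of coordinates, which preserves the alcove triangulation and shifts every type by $k$. It then obtains the $h$-statement by M\"obius inversion, which is exactly your reindexing $T\mapsto T+k$. You additionally verify explicitly that circuits map to circuits and compute the color shift, which the paper leaves implicit; your direction $I\mapsto I+1$ gives the shift $+k$ directly, whereas the paper's coordinate rotation actually sends $I$ to $I-1$, a harmless sign.
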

\begin{proof}
	Consider the rotation of the $n$-coordinates
	\[
		\rho: \R^n \to \R^n, (x_1,\dots,x_n) \mapsto
		(x_2,\dots,x_{n},x_1)
	\]
	which preserves the alcove triangulation. Then $\typ(\rho F) = \typ(F) + k$ and $f_S(\Tri) = f_{S+k}(\Tri)$.
	Applying M\"obius transformation, we obtain the same equality for $h_S$.
\end{proof}

As $k$ generates a subgroup of $\Z/n\Z$ with index $\gcd(k,n)$, we have the following corollary.
\begin{corollary}\label{cor:gcd}
	The functions $j \mapsto f_{S+j}(\Tri)$ and $j \mapsto h_{S+j}(\Tri)$ only depend on $j \bmod d$ and
	are constant when $\gcd(k,n)=1$.
\end{corollary}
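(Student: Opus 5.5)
The statement to prove is Corollary~\ref{cor:gcd}: the maps $j\mapsto f_{S+j}(\Tri)$ and $j\mapsto h_{S+j}(\Tri)$ depend only on $j \bmod d$, where $d=\gcd(k,n)$, and they are constant when $d=1$. The plan is to get this from the preceding proposition by iteration plus a little elementary group theory in $\Z/n\Z$. No further geometry of the alcove triangulation is needed.

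First, fix $S\subseteq \Z/n\Z$. Applying the proposition to the set $S+j$ gives $f_{S+j}(\Tri)=f_{S+j+k}(\Tri)$. Inducting on $m\ge 0$, we get $f_{S+j}(\Tri)=f_{S+j+mk}(\Tri)$ for all $m\in\Z_{\ge0}$. Since $nk\equiv 0$ and so $mk\equiv (m+n)k \pmod n$, this also holds for negative $m$. So $F(j):=f_{S+j}(\Tri)$ is invariant under translation by the cyclic subgroup $\langle k\rangle\subseteq \Z/n\Z$.

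The second step identifies this subgroup. By B\'ezout there are integers $a,b$ with $ak+bn=d$, so $d\equiv ak$ lies in $\langle k\rangle$. Conversely, every multiple of $k$ is a multiple of $d$. Hence $\langle k\rangle=d\Z/n\Z$, which has index $d$. It follows that $F(j)=F(j')$ whenever $j\equiv j' \pmod d$, so $F$ descends to a function on $\Z/d\Z$. When $d=1$ the subgroup is all of $\Z/n\Z$, so $F$ is constant.

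The same argument applies verbatim to $j\mapsto h_{S+j}(\Tri)$, using the second equality $h_S=h_{S+k}$ of the proposition. Alternatively, $h_{S+j}$ is an alternating sum of the $f_{T+j}$ over $T\subseteq S$, so the $h$ statement also follows from the $f$ statement applied to each $T$.

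There is no real obstacle here. The only point needing care is that the shift action is by $\Z/n\Z$, so the induction on $m$ must cover negative shifts. Periodicity handles this, as does the fact that translation by $k$ is a bijection of finite order.
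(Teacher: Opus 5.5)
Your proposal is correct and takes the same approach as the paper. The paper deduces the corollary from the preceding proposition ($f_S=f_{S+k}$, $h_S=h_{S+k}$) together with the fact that $k$ generates the index-$\gcd(k,n)$ subgroup of $\Z/n\Z$; your iteration argument and the B\'ezout identification $\langle k\rangle = d\Z/n\Z$ are exactly that reasoning, written out in full.
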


For any cyclic word $w$ on $\Z/n\Z$, let
\begin{equation}
	\cEq(w)  =\{j\in\Z/n\Z:w(j)=w(j+1)\}, \qquad \ceq(w) = |\cEq(w)|,
\end{equation}
denote its set of \emph{cyclic equalities}.

\begin{lemma}\label{lem:alcove_to_word}
	For any face $ F \in \Tri $ of dimension $ p-1 $,
	\begin{enumerate}
		\item there exists an ordering $ F =(I_0,\dots,I_{p-1}) $ of its vertices such that $I_0$ is lexicographically
		minimal, denoted $ \base(F):= I_0 $. Moreover, let $\alpha_i = e_{i} - e_{i-1}$ for $i \in \{2,\dots,n\}$, then
		for any $p \geq 1$, we let $I_{p} - I_{p-1} = \alpha_{A_i}:= \sum_{j\in A_i}\alpha_j$, for some subset $A_i \subseteq [n]$.
		\item The $ A_i $'s are mutually disjoint. Let $ A_{p}:= [n] \setminus \left(\bigcup_{i=1}^{p-1} A_i\right) $.
		\item Let $ w_F: [n] \to [p] $ be the word defined by
		\begin{equation}\label{eq:wF}
			w_F(i) = \min\{t: i \in A_1 \cup \cdots \cup A_t\},
		\end{equation}
		then
		\begin{equation}\label{eq:Fdeseq}
			\cDes(w_F) \subseteq \base(F) \subseteq \cDes(w_F) \cup \cEq(w_F).
		\end{equation}
		Conversely, for any word $ w: \Z/n\Z \to \Z/p\Z $ with $ \cdes(w) \leq k $ and any $ k $ -subset $ I $ in $ [n] $ such that
		\begin{equation}\label{eq:deseq}
			\cDes(w) \subseteq I \subseteq \cDes(w) \cup \cEq(w),
		\end{equation}
		there is a face $ F = (I = I_1,\dots,I_{p}) $ for which $ w = w_F $.
		\item Fix some word $ w: \Z/n\Z \to \Z/p\Z $. The number of $ I \in \binom{[n]}{k} $
		satisfying~\eqref{eq:deseq} is
		\[
			\binom{\ceq(w)}{k - \cdes(w)}.
		\]
	\end{enumerate}
\end{lemma}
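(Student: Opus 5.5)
We prove the four parts of Lemma~\ref{lem:alcove_to_word} in order, working throughout inside a maximal face that contains $F$.

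\textbf{Parts (1) and (2).} By Theorem~\ref{thm:max_face}, every face $F$ lies in some alcove $(J_0,\dots,J_{n-1})$ given by a circuit. Consecutive circuit vertices differ by a single move $j-1\to j$, which is exactly the root $\alpha_j=e_j-e_{j-1}$. Over a full turn of the circuit each index $j\in\Z/n\Z$ is moved exactly once, because by \eqref{eq:circuit} the moved positions are $w^{-1}(p)$, $p\in\Z/n\Z$.

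The vertices of $F$ form a subsequence of the circuit. We rotate this subsequence cyclically so that it starts at the lexicographically minimal vertex, and call that vertex $I_0$. The differences between consecutive chosen vertices are then sums $\alpha_{A_i}$ over consecutive arcs of the circuit. Since each $j$ is moved once per turn, these arcs partition $\Z/n\Z$, so the $A_i$ are disjoint, and the last arc (returning to $I_0$) is the complement $A_p$.

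One point needs care: the root $\alpha_1$ (the move $n\to1$) must be interpreted cyclically. We will show that starting at the lex-minimal vertex gives a consistent choice, and in particular that $1\in A_p$ can be arranged. The key check is that a move $n\to1$ makes a set lexicographically smaller, so the lex-minimal vertex is the one reached right after the last such wrap.

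\textbf{Part (3).} We use the word $w_F$ from \eqref{eq:wF}. The move for $j\in A_t$ occurs at step $t$, so the element $j-1$ is removed and $j$ is added at that time.

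We translate membership in $I_0$ into local conditions on $w_F$:
\begin{itemize}
\item If $w_F(j)>w_F(j+1)$, then position $j+1$ moves strictly before position $j$. Hence $j$ had to be present initially so that it can move out, giving $j\in I_0$.
\item If $w_F(j)<w_F(j+1)$, then $j$ is filled before it is emptied, which forces $j\notin I_0$.
\item If $w_F(j)=w_F(j+1)$, the two moves happen simultaneously and either choice is allowed.
\end{itemize}
These three cases together give \eqref{eq:Fdeseq}.

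For the converse, start from $I$ satisfying \eqref{eq:deseq} and apply $\alpha_{w^{-1}(1)},\alpha_{w^{-1}(2)},\dots$ in turn. The same local analysis shows that every intermediate vector stays a $0/1$-vector with coordinate sum $k$. Refining each block $w^{-1}(t)$ into single moves, in an order compatible with the equalities, produces a full circuit. That circuit is an alcove by Lam--Postnikov, and it contains the constructed sets, so they form a face $F$ with $w_F=w$.

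\textbf{Part (4).} Condition \eqref{eq:deseq} forces $I\supseteq\cDes(w)$ and leaves only the positions in $\cEq(w)$ free. Since $|I|=k$, we must choose exactly $k-\cdes(w)$ of the $\ceq(w)$ free positions, giving $\binom{\ceq(w)}{k-\cdes(w)}$ choices.

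The main obstacle is the converse in part (3): showing that any such $(w,I)$ really extends to an alcove. Doing this requires the circuit characterization to control the cyclic wraparound at position $n$.
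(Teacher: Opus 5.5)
Your proof is correct, and its skeleton is the paper's: embed $F$ in an alcove, read the $A_i$ off as arcs of single moves, and count in (4). You establish the sandwich \eqref{eq:Fdeseq} by a different mechanism, though.

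The paper takes an alcove $F'\supseteq F$ with permutation $w$ and uses $\base(F')=\cDes(w)$ from \eqref{eq:cDescirc}. It then observes that the coarsening $w_F$ of $w$ satisfies $\cDes(w_F)\subseteq\cDes(w)\subseteq\cDes(w_F)\cup\cEq(w_F)$. This is short and ties the base directly to cyclic descents of permutations. As written, however, it proves the sandwich for $\base(F')$, not $\base(F)$. When $F$ misses $\base(F')$, one must first replace $w$ by a cyclic shift of its values so that it really is an order-preserving refinement of $(A_1,\dots,A_p)$. This cannot always be avoided by choosing $F'$ cleverly: the edge $\{\{1,4\},\{2,4\}\}$ of $\mathcal{T}_{2,4}$ lies only in the alcoves with lex-minimal vertices $\{1,2\}$ and $\{1,3\}$.

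Your bookkeeping works directly at $\base(F)$: coordinate $j$ rises only at step $w_F(j)$, falls only at step $w_F(j+1)$, and every intermediate vertex is a $0/1$-vector. It uses only that each root occurs once per turn, so it needs neither \eqref{eq:cDescirc} nor that adjustment.

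Your converse is also more complete than the paper's. The paper just defines $I_t=I_{t-1}+\alpha_{A_t}$ without checking that these sets lie in a common alcove. Your $0/1$ check, refinement of each block into single moves, and appeal to the Lam--Postnikov description of alcoves as minimal circuits fill that in.

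One caveat, which the statement itself leaves implicit: for $I$ to be $\base(F)$, so that $w_F=w$ with the base ordering, you need $w$ surjective onto $[p]$ and $w(1)=p$. That is, the wrap move $\alpha_1$ must fall in the last block, which is exactly the normalization your remark $1\in A_p$ singles out.
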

\begin{proof}
	\noindent
	\begin{enumerate}
		\item\label{item1}
		Suppose that $ F=(I_0,\dots,I_{n-1}) $
		is a maximal face, and $w$ is a permutation taken from Theorem~\ref{thm:max_face} with $w(n)=1$.
		By~\eqref{eq:circuit}, we see that $I_p$ is lexicographically greater than $I_{p-1}$ for $p \geq 1$.
		If $ F $ is a face of lower dimension, then we may embed it into a maximal face, and the conclusion follows from the case for maximal faces.
		\item For a maximal face $ F $ with $ w \in S_n $ as in Theorem~\ref{thm:max_face}, we have $ A_i =
		\{w^{-1}(i)\} $. They are mutually disjoint because $ w $ is a permutation. For lower dimensional faces, we
		embed them into maximal faces to conclude.
		\item Suppose that $ F $ is a maximal face, and $ w \in S_n $ is as in Theorem~\ref{thm:max_face}. Moreover,
		by~(\ref{item1}), we may assume that $ w(n) = 1 $. Then $ w_F = w $ and $ \base(F) = \cDes(w) $
		by~\eqref{eq:cDescirc}. If $ F $ is lower dimensional, we may embed $ F $ into a maximal face $ F' $ with a
		permutation $ w $ with $ w(n) = 1 $ as in Theorem~\ref{thm:max_face} and~(\ref{item1}). Consequently, $
		\base(F') = \cDes(w) $. As $ F \subseteq F' $, the tuple $ (A_1,\dots,A_p) $ is an ordered set partition of
		which $ w $ is a refinement, and $ w_F $ is the reading word~\eqref{eq:reading-osp} of $ (A_1,\dots,A_p) $. The
		set $ \cDes(w_F) $ consists of indices $ i $ that appear in a later block than $ i+1 $, and the set $ \cEq(w_F)
		$ consists of indices $ i $ that appear in the same block as $ i+1 $. As $ w $ is a refinement of $
		(A_1,\dots,A_p) $, the conclusion follows. Conversely, given the word $ w: \Z/n\Z \to \Z/p\Z$, let $A_i$ be the
		preimage of $i$ under $w$. We can form $ (I_1,\dots,I_p) $ inductively by setting $ I_1 $ to $ I $, and use $
		I_p = I_{p-1}+\alpha_{A_i} $.
		\item We may assume that $ \cdes(w) \leq k $. The condition on $ I $ means that $ I \setminus \cDes(w) $ can be
		any subset of $ \cEq(w) $ with cardinality $ k-\cdes(w) $, hence the conclusion.
	\end{enumerate}
\end{proof}
\begin{example}
	Choose $F = (I_0,I_1)$ such that $I_0 = \{1,2\}, I_1 = \{2,4\}$, and embed $F$ into the face in Example~\ref{ex:maxface}.
	We have $I_1 - I_0 = (e_3-e_2) + (e_4-e_3) = \alpha_3 + \alpha_4$ so $A_1 = \{3,4\}$ and $A_2 = \{1,2\}$.
	The reading word of this ordered set partition is $w_F = 2211$.
	We have $\cDes(w_F) = \{2\}$ and $\cEq(w_F) = \{1,3\}$.
\end{example}

For a face $ F \in \Tri $, let
\begin{equation}
	\comp(F):= \alpha(\typ(F)^{\col(\base(F))}).
\end{equation}
For a composition $\alpha$, let $W(\alpha)$ be the set of cyclic words of content $\alpha$.

\begin{proposition}\label{prop:compF}
	Let $ \alpha \models n $
	be a composition with $ p $ parts. The number of nonempty faces $ F \in \Tri $ with $ \comp(F) = \alpha $ is equal to
	\begin{equation}\label{eq:Nalpha}
		\sum_{\substack{w \in W(\alpha) \\ w(n) = 1}} \binom{\ceq(w)}{k - \cdes(w)}.
	\end{equation}
\end{proposition}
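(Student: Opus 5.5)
The plan is to build a bijection between nonempty faces $F$ with $\comp(F)=\alpha$ and pairs $(w,I)$. Here $w\in W(\alpha)$ is a cyclic word with $w(n)=1$, and $I\in\binom{[n]}{k}$ satisfies \eqref{eq:deseq}. Once this bijection is in place, part (4) of Lemma~\ref{lem:alcove_to_word} counts the admissible $I$ for each fixed $w$ as $\binom{\ceq(w)}{k-\cdes(w)}$. Summing over $w$ then gives \eqref{eq:Nalpha}. Words with $\cdes(w)>k$ contribute zero automatically, since the binomial coefficient vanishes.

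\emph{Forward map.} Given a face $F$ of dimension $p-1$, take the ordering $(I_0,\dots,I_{p-1})$ from part (1) of Lemma~\ref{lem:alcove_to_word}, with $I_0=\base(F)$. Part (3) gives the word $w_F$ and the inclusions \eqref{eq:Fdeseq}, so send $F\mapsto (w_F,\base(F))$. Two facts need checking.
\begin{enumerate}
\item The normalization $w_F(n)=1$. From the construction, $n$ lies in $A_1$: embed $F$ in a maximal face whose permutation has $w(n)=1$, and the coarsening preserves this.
\item The content of $w_F$ is $\alpha=\comp(F)$. The block sizes $|A_t|$ are the successive color jumps between $I_{t-1}$ and $I_t$. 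Indeed, $I_t-I_{t-1}=\alpha_{A_t}$ increases $\sum_{i\in I}i$ by $|A_t|$, exactly as in the proof of Theorem~\ref{thm:coloring}. So the colors of $I_0,I_1,\dots$ are $\col(\base F)$, then $\col(\base F)+|A_1|$, and so on around $\Z/n\Z$. Reading the gaps of $\typ(F)$ clockwise from $\col(\base(F))$ therefore gives $(|A_1|,\dots,|A_p|)$, which is the content of $w_F$.
\end{enumerate}

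\emph{Inverse map.} Given $(w,I)$ with $w(n)=1$ and \eqref{eq:deseq}, the converse in part (3) produces a face $F$ with vertices $I=I_1,\ I_{t}=I_{t-1}+\alpha_{A_t}$. The same color computation shows $\comp(F)=\content(w)=\alpha$. Two things remain to verify for this step: that $I$ is indeed $\base(F)$, and that the two constructions are mutually inverse. The face is determined by $I$ together with the ordered partition $(A_1,\dots,A_p)$, i.e.\ by $w$. Conversely, $F$ determines its lex-minimal vertex and the consecutive differences, hence $w_F$.

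\emph{Main obstacle.} The delicate point is normalization, that is, ensuring the labeling is canonical. The cyclic word attached to a face is only defined up to the cyclic shift $w\mapsto w^{(q)}$, which corresponds to choosing a different starting vertex. We must show that starting at the lex-minimal vertex corresponds exactly to $w(n)=1$. Equivalently, we need that $I_0=\base(F)$ is the unique starting vertex for which $n\in A_1$, so that the bijection neither over- nor under-counts.

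The plan for this step is to argue via the circuit structure \eqref{eq:circuit}. The step adding $\alpha_{A_t}$ with $n\in A_t$ is the one that moves the element $n$ to $1$ cyclically. That move wraps around, so it is the unique step that lex-decreases the vertex. Hence the vertex just after it is the lex-minimal one. Once this is established, the count follows by summing part (4) of Lemma~\ref{lem:alcove_to_word} over $w$.
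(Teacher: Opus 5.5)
Your route is the paper's: send $F$ to $(w_F,\base(F))$, read the content off the color jumps, and count bases with Lemma~\ref{lem:alcove_to_word}(4). The gap is the normalization, and it cannot be closed, because lexicographic minimality of $I_0$ does not force $w_F(n)=1$.

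Your own argument already shows this. If the wrapping move $n\to 1$ is the unique lex-decreasing step and the lex-minimal vertex comes right after it, then $I_0=\base(F)$ exactly when the wrapping step is the \emph{last} one, $I_{p-1}\to I_0$. With $n\in A_1$ you would get $\base(F)=I_1$. Moreover, with $\alpha_j=e_j-e_{j-1}$ the move $n\to 1$ is $\alpha_1=e_1-e_n$. So the wrapping block is the one containing $1$; the block containing $n$ performs $n-1\to n$. Your forward step (1) fails for the same reason. The normalization $w(n)=1$ in Theorem~\ref{thm:max_face} \emph{chooses} the starting vertex of the circuit, so you cannot also start at $\base(F)$. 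For example, $w=3241$ gives $13\to 14\to 24\to 12$, whose first vertex is not lex-minimal. Indeed the table for $\Delta_{2,4}$ has $w(1)=4$ throughout.

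In fact the count with $w(n)=1$ is wrong. Take $n=4$, $k=2$, $\alpha=(2,1,1)$. The faces with $\comp(F)=\alpha$ are $\{12,14,24\}$, $\{12,23,24\}$, $\{13,24,34\}$, with $w_F=3211,\,3112,\,3121$; the second has $w_F(4)=2$. The words in $W(\alpha)$ with $w(4)=1$ are $1231,1321,2131,2311,3121,3211$. Each contributes $1$, so the sum is $6\neq 3$. Your inverse map breaks exactly where you flagged it: $(1231,\{3,4\})$ satisfies \eqref{eq:deseq} but builds $\{34,14,24\}$, whose base is $14$ and whose $\comp$ is $(1,1,2)$.

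The paper's proof records $w_F(n)=p$ at this point. With the stated conventions the right condition is $w_F(1)=p$ (wrapping block last). The rotation $w\mapsto w(\cdot+1)$ preserves content, $\cdes$ and $\ceq$, so the two sums coincide. With $w(1)=p$ your lex argument works blockwise:
\begin{itemize}
\item a block $A_t\not\ni 1$ removes $\min A_t-1$ and changes nothing below it, so it is lex-increasing;
\item the block containing $1$ is lex-decreasing.
\end{itemize}
Hence $I$ is the base in both directions of the bijection, and summing Lemma~\ref{lem:alcove_to_word}(4) finishes. The $w(n)=1$ sum instead counts faces with $\comp(F)=(\alpha_p,\dots,\alpha_1)$, via $w\mapsto p+1-w(n+1-\cdot)$; that is, $\comp(F)$ read counterclockwise.
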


\begin{proof}
	Let $ F \in \Tri $ and $ w_F $
	be as in Lemma~\ref{lem:alcove_to_word}. By~(\ref{item1}), $
		\col(I_{i+1}) = \col(I_i) + |A_i| $. Therefore, $
		\alpha(\typ(F)^{\col(\base(F))}) = (|A_1|,\dots,|A_p|) $. By~\eqref{eq:wF}, $ w_F $
	is the reading word of $ (A_1,\dots,A_p) $, so $
		\content(w_F) = \alpha(\typ(F)^{\col(\base(F))}) $. The definition of $ \base(F) $ forces $ w_F(n) = p $. For each such word $w$, Lemma~\ref{lem:alcove_to_word} (4) shows that there are $\binom{\ceq(w)}{k - \cdes(w)}$ possible choices of $\base(F)$.
\end{proof}

\begin{corollary}
	Let $ [\alpha] $ be a cyclic composition. Then
	\begin{equation}
		\sum_{\substack{S \subseteq [n] \\ \ccomp(S) \in [\alpha]}} f_S(\Tri) = \sum_{\beta \in [\alpha]} \sum_{\substack{w \in W(\beta) \\ w(n) = 1}} \binom{\ceq(w)}{k - \cdes(w)}.
	\end{equation}
\end{corollary}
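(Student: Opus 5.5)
The corollary follows by summing Proposition~\ref{prop:compF} over the rotations of $\alpha$. The key point is that the condition $\ccomp(S)\in[\alpha]$ on the left-hand side corresponds exactly to the condition $\comp(F)\in[\alpha]$ on faces.

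First, I will check that $\ccomp(\typ(F))$ is the cyclic class of $\comp(F)$. By definition, $\comp(F)$ is the composition of arc lengths of $\typ(F)$ read clockwise starting from $\col(\base(F))\in\typ(F)$. Since $\ccomp(S)=\{\comp^{(i)}(S): i\in S\}$, the composition $\comp(F)$ is one representative of $\ccomp(\typ(F))$.

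Next, I will rewrite the left-hand side as a count of faces:
\[
\sum_{\ccomp(S)=[\alpha]} f_S(\Tri)=\#\{F\in\Tri,\ F\neq\emptyset:\ \ccomp(\typ(F))=[\alpha]\}.
\]
Every nonempty face has exactly one type, and the type is nonempty. This face count splits according to the value $\beta:=\comp(F)$. By the previous step, $\beta$ ranges over the compositions in the class $[\alpha]$, and each face is counted for exactly one $\beta$. Here I must be careful that the sum $\sum_{\beta\in[\alpha]}$ is over distinct compositions in the class, not over rotation indices $i$. Otherwise a periodic $\alpha$ would be overcounted. I will read the corollary's sum as a sum over the set $[\alpha]$ of distinct compositions; with that reading the partition is exact.

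Finally, I will apply Proposition~\ref{prop:compF} to each $\beta\in[\alpha]$. It gives $\#\{F:\comp(F)=\beta\}=\sum_{w\in W(\beta),\,w(n)=1}\binom{\ceq(w)}{k-\cdes(w)}$, and summing over $\beta$ yields the claim.

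The only delicate step is this bookkeeping: showing that $\comp(F)$ is well defined and determines exactly one element of the class. This reduces to the fact that $\base(F)$ is uniquely determined, namely as the lexicographically minimal vertex, by Lemma~\ref{lem:alcove_to_word}(1).
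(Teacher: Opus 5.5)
Your proof is correct and follows the paper's own argument, which is the single line ``sum Proposition~\ref{prop:compF} over all $\comp(F)=\beta\in[\alpha]$.'' Your extra bookkeeping is what that line leaves implicit: $\comp(F)$ is the representative of $\ccomp(\typ(F))$ read from $\col(\base(F))\in\typ(F)$, so the faces are partitioned by $\beta$, with $\beta$ running over the distinct compositions in the class.
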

\begin{proof}
	Summing over all $\comp(F) = \beta \in [\alpha]$ in Proposition~\ref{prop:compF}, we obtain the desired equality.
\end{proof}

\begin{theorem}
	For all $ 0 < k < n $
	and every nonempty subset $ S \subseteq[n] $ with $ |S| = p $, over $\Q[q]/(q^n-1) $, we have
	\begin{equation}
		f_S(\Tri) = \sum_{i \in S} [q^i] \sum_{\substack{\content(w) = \comp^{(i)}(S) \\ w(n) = 1}} q^{\cmaj(w)} e_{k-\des(w)}\left( q^j: j \in \cEq(w) \right)
	\end{equation}
	where
	\begin{equation}
		\cmaj(w):= \sum_{i \in \cDes(w)} i
	\end{equation}
	is the \emph{cyclic major index}
\end{theorem}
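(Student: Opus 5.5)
The plan is to refine the counting of Proposition~\ref{prop:compF} so that it records not only the composition $\comp(F)$ but also the actual type $\typ(F)$. By Theorem~\ref{thm:coloring} and Lemma~\ref{lem:alcove_to_word}, a face $F$ of dimension $p-1$ is determined by two pieces of data: its word $w_F$ with $w_F(n)=p$, and its base $I=\base(F)$, which is a $k$-subset satisfying~\eqref{eq:deseq}. The colors of its vertices are then $\col(I), \col(I)+|A_1|, \dots$, so
\[
	\typ(F)=\{\col(I)+\alpha_1+\cdots+\alpha_j : 0\le j<p\}, \qquad \alpha=\content(w_F).
\]
In particular $\typ(F)=S$ holds exactly when $\col(I)=i$ for some $i\in S$ and $\content(w_F)=\comp^{(i)}(S)$. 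The first step is to check that this $i$ is unique. Given $F$, the base is the lexicographically first vertex, and its color is a definite element $i$ of $S$; hence the decomposition $f_S=\sum_{i\in S}\#\{F:\typ(F)=S,\ \col(\base F)=i\}$ has no overcounting.

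The second step fixes $i\in S$ and a word $w$ with content $\comp^{(i)}(S)$, normalized so that $w(n)$ is the last letter. This normalization is the convention ``$w(n)=1$'' of the statement after the cyclic relabeling used in Proposition~\ref{prop:compF}; I will match it to that proof. We must count the $k$-subsets $I$ with $\cDes(w)\subseteq I\subseteq \cDes(w)\cup\cEq(w)$ and $\sum_{t\in I} t\equiv i \pmod n$. Write $I=\cDes(w)\sqcup J$ with $J\subseteq\cEq(w)$ and $|J|=k-\cdes(w)$. Then the generating function of $q^{\sum_{t\in I}t}$ over all admissible $I$ is
\[
	q^{\cmaj(w)}\, e_{k-\cdes(w)}\bigl(q^j : j\in\cEq(w)\bigr)
\]
in $\Q[q]/(q^n-1)$, because $q^{\sum I}=q^{\cmaj(w)}\prod_{j\in J}q^j$ and summing over $J$ gives the elementary symmetric polynomial. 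Extracting the coefficient of $q^i$ counts exactly the bases of color $i$. Here I read $\des(w)$ in the statement as $\cdes(w)$.

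The third step sums over words $w$ and over $i\in S$. This uses the converse part of Lemma~\ref{lem:alcove_to_word}(3), which says every pair $(w,I)$ gives a face, together with the uniqueness of the encoding $F\mapsto(w_F,\base F)$ from parts (1)--(3). These give a bijection between faces of type $S$ with base color $i$ and the pairs counted above, which yields the formula.

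The main obstacle is the bookkeeping in steps one and two, namely the normalization of $w$ and of the starting color. The lemma states $w_F(n)=p$, while the theorem writes $w(n)=1$, and we must be sure that rotating labels by the $p$-shift~\eqref{eq:pshift} matches $\comp^{(i)}(S)$ read starting from $i=\col(\base F)$. To settle this I would verify the convention directly on Example~\ref{ex:maxface} and Table~\ref{tab:d24}.
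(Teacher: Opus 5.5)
Your argument is the same as the paper's; its proof is your Steps 1--2 compressed into two sentences, and it glosses over the same point. The normalization you flagged is a genuine gap, and the repair you sketch does not close it.

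With $\base(F)$ the lexicographically minimal vertex, the claim $w_F(n)=p$ is false. Each elementary move $e_j-e_{j-1}$ with $j\ge 2$ strictly increases a $k$-set lexicographically, and only $e_1-e_n$ decreases it. So the base is the vertex entered by the step containing the index $1$, i.e.\ $1\in A_p$ and $w_F(1)=p$. This matches Table~\ref{tab:d24}, where every $w$ has $w(1)=4$.

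With this base, the $i$-th summand of the statement does not count faces of type $S$ whose base has color $i$. Take $(k,n)=(2,4)$ and $S=\{1,2\}$. The faces of type $S$ are $\{\{1,4\},\{2,4\}\}$ and $\{\{2,3\},\{2,4\}\}$, and both have base of color $1$. But the $i=1$ summand is $[q^1]\,q^3e_1(q,q^2)=1$ (only $w=2221$ qualifies), and the $i=2$ summand is also $1$. The totals agree, but the termwise bijection of your Step 3 is false.

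A letter shift $w\mapsto w^{(t)}$ cannot fix this. It re-reads $F$ from a different vertex, so it changes $i$ and only permutes the summands. It also changes only the value of $w$ at position $1$; it never moves the constraint from position $1$ to position $n$.

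The clean repair is to change the distinguished vertex rather than the word.
\begin{itemize}
\item Order the vertices of $F$ cyclically by color, and let $J(F)$ be the vertex whose outgoing step contains the index $n$. It is unique because the steps $A_1,\dots,A_p$ partition $[n]$, and reading $w_F$ from $J(F)$ gives $w_F(n)=1$ automatically.
\item The sandwich $\cDes(w_F)\subseteq J(F)\subseteq\cDes(w_F)\cup\cEq(w_F)$ holds for any starting vertex. If $\tau$ is the permutation of a maximal face containing $F$, shifted so that $\cDes(\tau)=J(F)$, then $w_F$ is a weakly increasing function of $\tau$.
\item Conversely, a pair $(w,I)$ with $w(n)=1$ yields, by Lemma~\ref{lem:alcove_to_word}(3), a face whose first step is $w^{-1}(1)\ni n$. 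So its distinguished vertex is $I$, and the two maps are inverse.
\end{itemize}
With $\col(J(F))=i$ in place of $\col(\base F)=i$, your Steps 2 and 3 then prove the formula summand by summand.

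If you keep the lexicographic base instead, you get the identity with $w(1)=p$. Turning that into the stated form requires three further ingredients: a letter shift; a positional rotation $w(j)\mapsto w(j\pm 1)$, which multiplies each generating function by $q^{\pm k}$; and the symmetry $f_S(\Tri)=f_{S+k}(\Tri)$ stated just before Corollary~\ref{cor:gcd}.
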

\begin{proof}
	Let $ F $ be a face with $ \typ(F) = S $, and let $ w_F $
	be as in Lemma~\ref{lem:alcove_to_word}. Since $ \typ(F) = S $, there exists $ i \in S $ such that $ \col(\base(F)) = i $.
	By Lemma~\ref{lem:alcove_to_word}, $
		\cDes(w_F) \subseteq \base(F) \subseteq \cDes(w_F) \cup \cEq(w_F) $
, so the inner sum is a generating function of $
		q^{\sum_{j \in \base(F)} j} $.
\end{proof}

\begin{lemma}\label{lem:alphap}
	Let $ \alpha = (\alpha_1,\dots,\alpha_p) \models n $. Then
	\begin{equation}
		\alpha_1 \sum_{w \in W(\alpha)} \binom{\ceq(w)}{k-\cdes(w)} = n \sum_{\substack{w \in W(\alpha) \\ w(n) = 1}} \binom{\ceq(w)}{k - \cdes(w)}.
	\end{equation}
\end{lemma}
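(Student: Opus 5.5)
We use a double-counting argument based on the rotation action of $\Z/n\Z$ on cyclic words. Let $\Z/n\Z$ act on $W(\alpha)$ by $(\sigma w)(i) = w(i+1)$. Since $\cDes(\sigma w) = \cDes(w) - 1$ and $\cEq(\sigma w) = \cEq(w) - 1$, both $\cdes$ and $\ceq$ are invariant under this action. Hence the weight
\[
\phi(w) := \binom{\ceq(w)}{k-\cdes(w)}
\]
is constant on orbits. It then suffices to compare, orbit by orbit, the total number of words with the number of words satisfying $w(n)=1$.

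The plan is to count the set
\[
P := \{(w,j) : w \in W(\alpha),\ j \in \Z/n\Z,\ w(j) = 1\},
\]
weighted by $\phi(w)$, in two ways.

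First, fix $w$ and count $j$. Since $\content(w)=\alpha$, there are exactly $\alpha_1$ positions $j$ with $w(j)=1$. This gives
\[
\sum_{(w,j) \in P} \phi(w) = \alpha_1 \sum_{w \in W(\alpha)} \phi(w).
\]

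Second, fix $j$ and count $w$. Rotating by $n-j$ sends $w$ to the word $u = \sigma^{\,j-n} w$, defined by $u(i) = w(i+j-n)$, so that $u(n) = w(j) = 1$. The map $w \mapsto u$ is a bijection from $\{w \in W(\alpha) : w(j)=1\}$ to $\{u \in W(\alpha) : u(n)=1\}$, because rotation preserves content. It also preserves $\phi$. Therefore each of the $n$ values of $j$ contributes
\[
\sum_{w \in W(\alpha),\ w(n)=1} \phi(w),
\]
and the total is $n$ times this sum. Equating the two counts of $P$ gives the lemma.

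The only step that needs care is the invariance of $\cdes$ and $\ceq$ under rotation. It holds because both sets are defined cyclically on $\Z/n\Z$, comparing each position with the next one cyclically, including $w(n)$ with $w(1)$. A rotation therefore simply translates these sets without changing their sizes. No result beyond the definitions is needed, and in particular no $\gcd$ hypothesis is required, since we count pairs rather than orbits.
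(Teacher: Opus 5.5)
Your proposal is correct and follows essentially the same route as the paper: both double-count the pairs $(w,j)$ with $w\in W(\alpha)$ and $w(j)=1$, weighted by $\binom{\ceq(w)}{k-\cdes(w)}$, getting $\alpha_1$ pairs per word on one side and using the $\cdes$- and $\ceq$-preserving rotation to move position $j$ to $n$ on the other. Your version simply spells out the rotation bijection and the invariance of $\cDes$ and $\cEq$ under translation more explicitly than the paper does.
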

\begin{proof}
	The $\Z/n\Z$ action on cyclic words generated by
	\begin{equation}\label{eq:nZaction}
		w \mapsto w', \qquad w'(i) = w(i-1),
	\end{equation}
	preserves $\cdes$ and $\ceq$.
	There are $\alpha_1$ many $i$ such that $w(i) = 1$, so the left hand side counts the number of pairs $(w,i)$ for which $w\in W(\alpha)$ and $w(i) = 1$, weighted by the binomial coefficient.
	The right hand side applies the $\Z/n\Z$ action to $w$ with $w(n) = 1$ and counts the same set of pairs with the same weight.
\end{proof}

\section{Symmetric function identities}\label{sec:symm}
Our main method of proving the main theorem is via symmetric function identities. We draw inspirations from Ellzey and
Wachs's quasisymmetric functions counting cyclic descents on Smirnov words~\cite{ellzey,EllzeyWachs}. Let
\begin{equation}\label{eq:Gkn}
	G_{k,n} = \sum_{w: \Z/n\Z \to \Z_{>0}} \binom{\ceq(w)}{k - \cdes(w)} \prod_{i \in \Z/n\Z} x_{w(i)}
\end{equation}
be a power series in $x_1,x_2,\dots$.

Let
\begin{equation}\label{eq:Hypkn}
	\Hyp_{k,n}= \sum_{\substack{\alpha \in \Z_{\geq0}^{\Z/k\Z}, \\ \alpha \models n, \\ \alpha \text{ is hypersimplicial}}} h_{\alpha}
\end{equation}
By Definition~\ref{def:reading_word}, we have
\begin{equation}
	\Hyp_{k,n}(x_1,\dots,x_n) = \sum_{\dosp \in \hDOSP(k,n)} h_{s_1}\cdots h_{s_p}
\end{equation}
where the second sum is over all $\dosp \in \hDOSP(k,n)$ with decorations $(s_1,\dots,s_p) \models k$.

Let $ \Theta $ be the $ n \times n $ -matrix with entries
\begin{equation}
	\Theta_{ij}=
	\begin{cases}
		1,   & i<j, \\
		1+z, & i=j, \\
		z,   & i>j,
	\end{cases}
\end{equation}
and let $ X $ be the diagonal matrix with $ x_1,\dots,x_n $ along the diagonal.

\begin{theorem}\label{thm:GHyp}
	For all $n \geq 2$ and $1 \leq k \leq n-1$, we have
	\begin{equation}
		G_{k,n}= \frac{n}{k} \Hyp_{k,n}
	\end{equation}
\end{theorem}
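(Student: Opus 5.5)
The plan is to encode both sides as traces of transfer matrices and compare their logarithmic generating functions. This is presumably why the matrix $\Theta$ was introduced just before the statement.

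\textbf{Step 1: write $G_{k,n}$ as a trace.} Since $\binom{\ceq(w)}{k-\cdes(w)}=[z^k]\,z^{\cdes(w)}(1+z)^{\ceq(w)}$, each cyclic word is weighted by a product of local factors. Each cyclic position $j$ contributes $1$ if $w(j)<w(j+1)$, $1+z$ if $w(j)=w(j+1)$, and $z$ if $w(j)>w(j+1)$. These are exactly the entries $\Theta_{w(j),w(j+1)}$. Hence, in $n$ variables (which suffices after stabilization),
\[
G_{k,n}=[z^k]\,\tr\bigl((X\Theta)^n\bigr),\qquad\text{so}\qquad
\sum_{n\ge1}\frac{t^n}{n}\sum_{k}z^kG_{k,n}=-\log\det(I-tX\Theta).
\]

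\textbf{Step 2: compute $\det(I-tX\Theta)$ in closed form.} Write $\Theta=U+zL$, where $U$ is the upper triangular all-ones matrix (diagonal included) and $L$ is the lower triangular all-ones matrix (diagonal included). Both are inverted by bidiagonal difference matrices, so $I-tX\Theta$ is a low-rank perturbation of a triangular matrix. Row reduction, or the matrix determinant lemma, should then express the determinant through $H(t)=\prod_i(1-tx_i)^{-1}$ evaluated at $t$ and at $zt$. The expected shape is something like
\[
\det(I-tX\Theta)=\frac{H(t)-zH(zt)\cdots}{(1-z)\,H(t)H(zt)}.
\]
The exact form will be fixed by checking $n=1$, where the determinant is $1-tx_1(1+z)$, and $n=2$.

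\textbf{Step 3: expand $\Hyp_{k,n}$ the same way.} A hypersimplicial weak composition on $\Z/k\Z$ is a cyclic sequence of positive parts $\alpha_i\ge2$, each followed by a run of at most $\alpha_i-1$ zeros. Its weight is a product of local factors $h_{\alpha_i}$, so the generating function of such cyclic sequences, marked by $t^{|\alpha|}z^{\#\text{parts}}$, is again a logarithm. Equivalently, it is a trace of a transfer operator on "blocks", namely one nonzero part together with its trailing zeros. The cyclic-lemma factor in the statement should appear naturally here. Rotations of the $k$ positions give the factor $1/k$ in a $\log$ expansion in $z$, while rotations of the $n$ letters give the factor $1/n$ in the expansion in $t$, as in Lemma~\ref{lem:alphap}. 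The target identity is then equivalent to
\[
-\log\det(I-tX\Theta)=\sum_{n,k}\frac{t^nz^k}{k}\Hyp_{k,n}.
\]

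\textbf{Step 4: match the two sides, the main obstacle.} I would take $-\log$ of the closed form from Step 2 and rewrite it as $-\log(1-B(t,z))$. Here $B$ is the generating function of a single block: $\sum_{a\ge2}h_a t^a z\,(1+z+\cdots+z^{a-1})$, with the trailing zeros accounted for by powers of $z$. The identity $zH(zt)$ versus $H(t)$ should produce exactly the geometric factor $(1-z^a)/(1-z)$, and that factor is precisely the constraint "at most $a-1$ zeros." The difficulty is the bookkeeping. I must make sure that the zeros are counted in $z$ (so the exponent is $k$, the total length on $\Z/k\Z$, not the number of positive parts), and that the excluded part size $a=1$ and the boundary cases $k=0$ and $k=n$ drop out. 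These exclusions are why the statement requires $1\le k\le n-1$. If the determinant route becomes messy, the fallback is a direct bijective argument via the cyclic lemma. In that argument $k\,G_{k,n}$ counts triples $(w,I,j)$ with $j\in I$. The set $I$ cuts $\Z/n\Z$ into $k$ weakly increasing arcs separated by weak descents. I would then regroup these arcs into multisets with zero-padding, which is counted by $n\,\Hyp_{k,n}$ together with a marked position in $\Z/n\Z$.
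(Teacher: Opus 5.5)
Your route is the paper's. You write $\sum_k G_{k,n}z^k=\tr((X\Theta)^n)$, pass to $-\log\det(I-tX\Theta)$, express the determinant through $H(t)$ and $H(zt)$, and read the remaining logarithm as a cyclic sum over blocks.

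\textbf{First gap: the determinant.} This step carries the proof, and you do not carry it out. The form you display is also wrong: with numerator $H(t)-zH(zt)$ it equals $1$ at $n=1$. The correct identity is
\[
\det(I-tX\Theta)=\sum_{j\ge 0}(-t)^j\,[j+1]_z\,e_j=\frac{1}{1-z}\Bigl(\frac{1}{H(t)}-\frac{z}{H(zt)}\Bigr)=\frac{H(zt)-zH(t)}{(1-z)H(t)H(zt)}.
\]
It has to be proved for all $n$, not pinned down by checking $n=1,2$. The paper expands $\det(I-tX\Theta)$ into principal minors. Every principal submatrix $\Theta_{S,S}$ has the same shape as $\Theta$, and $D_j=\det\Theta^{(j)}$ satisfies $D_j=(1+z)D_{j-1}-zD_{j-2}$, so $D_j=[j+1]_z$. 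Your rank-one idea also works:
\begin{itemize}
\item $\Theta=(1-z)U+zI+zJ$, with $J$ the all-ones matrix.
\item The triangular part $I-tX(zI+(1-z)U)$ has determinant $1/H(t)$.
\item The matrix-determinant-lemma correction telescopes to $\bigl(1-zH(t)/H(zt)\bigr)/(1-z)$.
\end{itemize}

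\textbf{Second gap: the block series is off by one, so Step~4 fails.} From the correct determinant, $\frac{H(zt)-zH(t)}{1-z}=1-B$ with
\[
B=\sum_{a\ge 2}h_a t^a\,(z+z^2+\cdots+z^{a-1}).
\]
This holds because $\frac{z^a-z}{1-z}=-(z+\cdots+z^{a-1})$ and the $a=1$ term vanishes. So a positive entry $a$ carries $z^j$, where $j$ is the distance to the next positive entry and $1\le j\le a-1$. In other words, each positive entry is followed by at most $a-2$ zeros. This matches the condition $1\le s_i\le |S_i|-1$ on hypersimplicial decorated ordered set partitions, and the condition $\lambda_i\ge\mu_i+1$ in the paper's computation.

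Your $B$ has an extra $z^a$ in each block, because you took ``at most $\alpha_i-1$ zeros'' from the definition literally. The identity needs the stricter reading. For $(k,n)=(3,4)$ your $B$ would also count $(2,2,0)$ and its rotations, predicting $\frac43(3h_4+3h_{22})$, whereas $G_{3,4}=4h_4$.

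\textbf{With both corrections the rest of your outline goes through as in the paper.} Rotating the starting point on $\Z/k\Z$ converts $\sum_p B^p/p$ into $\sum_k \Hyp_{k,n}t^nz^k/k$. Every term of $B$ has $z$-degree below its $t$-degree. Hence
\[
-\log\det(I-tX\Theta)=\log H(t)+\log H(zt)+\sum_{n\ge 2}\sum_{1\le k\le n-1}\frac{1}{k}\Hyp_{k,n}\,t^nz^k,
\]
and the two extra logarithms contribute only at $k=0$ and $k=n$.
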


\begin{lemma}\label{lem:Gtrace}
	\begin{equation}
		\sum_{k \geq 0} G_{k,n}(x_1,\dots,x_n) z^k = \tr((\Theta X)^n).
	\end{equation}
\end{lemma}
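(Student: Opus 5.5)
We prove Lemma~\ref{lem:Gtrace} by the standard transfer-matrix expansion of a trace. Write
\[
	\tr((\Theta X)^n) = \sum_{w:\Z/n\Z \to [n]} \prod_{i \in \Z/n\Z} \Theta_{w(i),w(i+1)}\, x_{w(i+1)},
\]
where the sum runs over all closed walks $w(1)\to w(2)\to\cdots\to w(n)\to w(1)$ on the index set $[n]$, that is, over all cyclic words $w:\Z/n\Z\to[n]$. The monomial factor is $\prod_{i} x_{w(i)}$, since each position contributes its $x$ exactly once as $i$ ranges over $\Z/n\Z$. This step is routine: expand the matrix product entrywise and take the diagonal sum.

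Next we read off the $z$-weight of each word. By the definition of $\Theta$, a cyclic position $i$ with $w(i)<w(i+1)$ contributes $1$. A position in $\cDes(w)$ contributes $z$. A position in $\cEq(w)$ contributes $1+z$. Hence the weight of $w$ is
\[
	z^{\cdes(w)}(1+z)^{\ceq(w)} = \sum_{k} \binom{\ceq(w)}{k-\cdes(w)} z^k,
\]
by the binomial theorem, reindexing $j = k-\cdes(w)$. Summing over $w$ and extracting the coefficient of $z^k$ gives exactly the definition~\eqref{eq:Gkn} of $G_{k,n}$, restricted to the variables $x_1,\dots,x_n$.

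The only point requiring care is that $G_{k,n}$ is defined with words into $\Z_{>0}$, while the trace uses only letters in $[n]$. Setting $x_m=0$ for $m>n$ kills exactly the words using a letter larger than $n$, so the two sides agree as polynomials in $x_1,\dots,x_n$. This matches the notation $G_{k,n}(x_1,\dots,x_n)$ in the statement. I expect no real obstacle beyond this bookkeeping and the convention for the cyclic index $i+1$ at $i=n$. That convention is consistent because the trace closes the walk from $w(n)$ back to $w(1)$, precisely matching the cyclic descent and equality at $j=n$ in~\eqref{eq:cdes}.
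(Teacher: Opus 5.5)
Your proposal is correct and follows the paper's proof essentially verbatim. Both expand $\tr((\Theta X)^n)$ as a sum over cyclic words, read off the weight $z^{\cdes(w)}(1+z)^{\ceq(w)}$ from the entries of $\Theta$, and apply the binomial theorem. Your remark that restricting to $x_1,\dots,x_n$ (setting $x_m=0$ for $m>n$) reconciles words into $\Z_{>0}$ with letters in $[n]$ is a bookkeeping point the paper leaves implicit.
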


\begin{proof}
	Expanding the trace of the $ n $ -th power of $ \Theta X $, the trace is
	\begin{equation}
		\tr((\Theta X)^n) = \sum_{w: \Z/n\Z \to \Z_{>0}} \prod_{i \in \Z/n\Z} \Theta_{w(i)w(i+1)} x_{w(i+1)}
	\end{equation}
	the sum being over all cyclic words of length $ n $. If $ w(i) < w(i+1) $, then $
		\Theta_{w(i)w(i+1)} = 1 $; if $ w(i) = w(i+1) $, i.e. $ i \in \cEq(w) $, then $
		\Theta_{w(i)w(i+1)} = 1+z $; if $ w(i) > w(i+1) $, i.e. $ i \in \cDes(w) $, then $
		\Theta_{w(i)w(i+1)} = z $. Therefore,
	\begin{equation}
		\tr((\Theta X)^n) = \sum_{w: \Z/n\Z \to \Z} z^{\cdes(w)} (1+z)^{\ceq(w)} x^w = \sum_{w: \Z/n\Z \to \Z} \sum_{k \geq 0} \binom{\ceq(w)}{k-\cdes(w)} z^k x^w.
	\end{equation}
\end{proof}

Let
\begin{equation}
	H(t) = \prod_{i \geq 1} \frac{1}{1 - x_i t} = \sum_{n \geq 0} h_n t^n.
\end{equation}
Then
\begin{equation}
	\frac{1}{H(t)} = \prod_{i \geq 1} (1-x_i t) = \sum_{j \geq 0} (-1)^j e_j t^j.
\end{equation}

\begin{lemma}\label{lem:detI-t}
	Then
	\begin{equation}
		\det(I - t \Theta X) = \frac{1}{1-z} \left( \frac{1}{H(t)} - \frac{z}{H(zt)} \right).
	\end{equation}
\end{lemma}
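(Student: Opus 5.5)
We need to prove $\det(I - t\Theta X) = \frac{1}{1-z}\bigl(\frac{1}{H(t)} - \frac{z}{H(zt)}\bigr)$, where $H$ is truncated to $x_1,\dots,x_n$ since $X$ is $n\times n$. The plan is to reduce the determinant to the principal minors of $\Theta$, evaluate those minors in closed form, and then compare the resulting elementary-symmetric expansion with the right-hand side.

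First I expand the characteristic-type determinant in principal minors. Because $X$ is diagonal,
\[
\det(I - t\Theta X) = \sum_{T\subseteq[n]} (-t)^{|T|} \det(\Theta_T) \prod_{i\in T} x_i,
\]
where $\Theta_T$ is the principal submatrix of $\Theta$ indexed by $T$. The principal submatrix on any $m$-subset has the same shape as $\Theta$ itself: $1+z$ on the diagonal, $1$ above it and $z$ below it. Hence $\det\Theta_T = D_m$ depends only on $m=|T|$, and
\[
\det(I - t\Theta X)=\sum_{m\ge 0}(-t)^m D_m\, e_m(x_1,\dots,x_n).
\]

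The main step is to compute $D_m$; I claim $D_m = \frac{1-z^{m+1}}{1-z} = 1+z+\dots+z^m$. To see this, write the $m\times m$ matrix as $\Theta = zJ + (1-z)U$, where $J$ is the all-ones matrix and $U$ is the upper unitriangular matrix with ones on and above the diagonal. Check the entries: above the diagonal $z+(1-z)=1$, on it $z+(1-z)=1$, below it $z$. That gives diagonal $1$, not $1+z$, so this decomposition is wrong.

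Correct it to $\Theta = zJ + U$. Above the diagonal this gives $z+1$, which is also wrong. Instead take $\Theta = J + zL$, where $L$ is lower unitriangular with ones on and below the diagonal. Then the diagonal is $1+z$, the entries above are $1$, and the entries below are $1+z$. That fails as well.

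The right decomposition is $\Theta = U + zL$. Above the diagonal this gives $1$, on it $1+z$, below it $z$, which is correct. Since $U$ is invertible, $\det\Theta = \det(U)\det(I + zU^{-1}L) = \det(I+zU^{-1}L)$.

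Rather than push through $U^{-1}L$, I would compute $D_m$ by row operations. Subtracting row $i+1$ from row $i$ for $i=1,\dots,m-1$ leaves only entries on and near the diagonal. In row $i<m$:
\begin{itemize}
\item in column $i$ the entry is $(1+z)-z = 1$;
\item in column $i+1$ it is $1-(1+z)=-z$;
\item in columns $j<i$ it is $z-z=0$;
\item in columns $j>i+1$ it is $1-1=0$.
\end{itemize}
Row $m$ stays $(z,\dots,z,1+z)$. Expanding this bidiagonal-plus-last-row matrix gives a recursion. Alternatively, solve $\Theta v = 0$-type relations using the rank-one structure. Either route should yield $D_m = \sum_{j=0}^m z^j$, and I would verify it on small cases: $D_1=1+z$, and $D_2=(1+z)^2-z = 1+z+z^2$.

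Finally I match the right-hand side. Expanding,
\[
\frac{1}{1-z}\Bigl(\frac{1}{H(t)}-\frac{z}{H(zt)}\Bigr) = \sum_m (-1)^m e_m t^m\,\frac{1-z^{m+1}}{1-z},
\]
which equals $\sum_m (-t)^m D_m e_m$ and completes the argument. The only real obstacle is the determinant evaluation $D_m=[m+1]_z$, which I would settle cleanly by expanding along the last row of the row-reduced matrix and inducting on $m$.
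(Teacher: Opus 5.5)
Your proposal is correct and follows the paper's proof: expand $\det(I-t\Theta X)$ in principal minors, note that every $m\times m$ principal minor equals $D_m=[m+1]_z$, and match against $\sum_m(-1)^m e_m t^m$. The only difference is that the paper gets $D_m$ from the recurrence $D_m=(1+z)D_{m-1}-zD_{m-2}$ rather than from your row differences. The step you left open does close as planned, and no induction is needed. Expand your row-reduced matrix along its last row: deleting column $j$ leaves a block-diagonal minor with triangular blocks of determinant $(-z)^{m-j}$, so $D_m=(1+z)+\sum_{j<m}z^{m-j+1}=[m+1]_z$.
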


\begin{proof}
	Let $ \Theta^{(j)} $ be the principal $ j \times j $ -minor of $ \Theta $. The determinants of $ \Theta^{(j)} $
	satisfy the following recursion.
	\begin{equation}
		\det \Theta^{(j)} = (1+z) \det \Theta^{(j-1)} - z \det \Theta^{(j-2)}
	\end{equation}
	with initial conditions $
		\det \Theta^{(0)} = 1 $ and $
		\det \Theta^{(1)} = 1+z $.
	By induction, we can show that $
		\det \Theta^{(j)} = [j+1]_z = 1 + z + \cdots + z^j = \frac{1 - z^{j+1}}{1-z} $.
	We expand the determinant
	\begin{equation}
		\det(I - t\Theta X) = \sum_{j \geq 0} (-t)^j \sum_{|S|=j} \det(\Theta_{S,S}) x^S.
	\end{equation}
	Then
	\begin{multline}
		\det(I - t\Theta X) = \sum_{j \geq 0} (-1)^j [j+1]_z e_j t^j \\= \frac{1}{1-z} \left( \sum_{j \geq 0} (-1)^j e_j t^j - z \sum_{j \geq 0} (-1)^j e_j (zt)^j \right) \\= \frac{1}{1-z}\left(\frac{1}{H(t)} - \frac{z}{H(zt)}\right).
	\end{multline}
\end{proof}

\begin{lemma}
	We have
	\begin{equation}
		\sum_{n \geq 1} \Hyp_{k,n}(x_1,\dots,x_n)t^n = k [z^k] \log \frac{1-z}{H(zt) - z H(t)}.
	\end{equation}
\end{lemma}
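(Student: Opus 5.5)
The plan is a transfer-matrix/cyclic-lemma argument. I would read each hypersimplicial weak composition $\alpha\in\Z_{\ge0}^{\Z/k\Z}$ as a cyclic sequence of ``blocks'' and compute the generating function of a single block. Then I would use $-\log(1-B)=\sum_{p\ge1}B^p/p$ to account for cyclic sequences, with the factor $k$ coming from the choice of where on $\Z/k\Z$ the pattern starts.

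\emph{Step 1: block decomposition.} Cut $\alpha$ on the circle $\Z/k\Z$ at each positive entry. The result is a cyclic sequence of $p\ge1$ blocks, where a block is a positive entry $a$ followed by a run of zeros. I would use the convention of $\hDOSP(k,n)$ (Definition~\ref{def:dosp}) together with the identification $\Hyp_{k,n}=\sum_{\dosp}h_{s_1}\cdots h_{s_p}$ stated after \eqref{eq:Hypkn}. Under it, a block of size $a$ occupies $s$ consecutive positions of $\Z/k\Z$ (the entry $a$ and then $s-1$ zeros), with $1\le s\le a-1$. I would double-check this range against Definition~\ref{def:hypersimplicial}, since the wording ``at most $\alpha_i-1$ zeros'' must be matched to $s_i\le |S_i|-1$. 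A block contributes $h_a$ to $h_\alpha$, $a$ to $n$, and $s$ to the total length $k$. Marking these by $t^a$ and $z^s$, the block generating function is
\[
B(z,t)=\sum_{a\ge2}h_a t^a\,(z+\cdots+z^{a-1})=\sum_{a\ge 0}h_at^a\frac{z-z^a}{1-z}+1=1-\frac{H(zt)-zH(t)}{1-z}.
\]
This uses that the $a=0$ term equals $-1$ and the $a=1$ term vanishes. Consequently
\[
\log\frac{1-z}{H(zt)-zH(t)}=-\log(1-B)=\sum_{p\ge1}\frac{B^p}{p},
\]
which is well defined as a power series because $B$ has no constant term.

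\emph{Step 2: cyclic counting.} The coefficient $[z^k]B^p$ is the weighted count of linear sequences $(b_1,\dots,b_p)$ of blocks with total length $k$. Since the $h_a$ commute, the weight of a sequence depends only on the multiset of its blocks. Given such a sequence and an offset $c\in\Z/k\Z$, I would place $b_1$ starting at position $c$ and then lay out $b_2,\dots,b_p$ clockwise. This produces a hypersimplicial $\alpha\in\Z_{\ge0}^{\Z/k\Z}$ with $p$ positive entries and weight $h_\alpha t^{|\alpha|}$.

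Conversely, each such $\alpha$ arises from exactly $p$ pairs (sequence, offset), one for each choice of which positive entry of $\alpha$ plays the role of $b_1$. These $p$ pairs are distinct because their offsets are distinct positions. Hence
\[
\sum_{\alpha\text{ with }p\text{ blocks}}h_\alpha t^{|\alpha|}=\frac{k}{p}[z^k]B^p.
\]
Summing over $p\ge1$ gives $k[z^k]\sum_p B^p/p$. Every $\alpha$ with $n\ge1$ has at least one positive entry, so no terms are missed, and this is the claimed identity.

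\emph{Main obstacle.} The delicate point is the bijective bookkeeping in Step 2. One must make sure the marked-offset count is exactly $p$ to one even when $\alpha$ is periodic under rotation of $\Z/k\Z$. This holds because we mark a position of $\Z/k\Z$ rather than a rotation class, so no stabilizer correction appears. The other point needing care is aligning the allowed zero-run lengths in Definition~\ref{def:hypersimplicial} with the decoration range $1\le s\le a-1$, since this determines the factor $(z-z^a)/(1-z)$.
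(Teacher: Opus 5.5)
Your proposal is correct and takes essentially the same route as the paper: both write $\frac{H(zt)-zH(t)}{1-z}=1-B$ with $B=\sum_{j\ge1}z^j\sum_{a\ge j+1}h_at^a$, expand $-\log(1-B)=\sum_p B^p/p$, and read $[z^k]B^p$ as linear block sequences $(\mu,\lambda)$ with $\lambda_i\ge\mu_i+1$. Your marked-offset count, showing that $k[z^k]B^p$ counts each hypersimplicial $\alpha$ with $p$ positive entries exactly $p$ times, is the justification of the factor $k/p$ that the paper's proof leaves implicit; your block-length range $1\le s\le a-1$ is also the one the paper's proof actually uses, whereas the literal wording of Definition~\ref{def:hypersimplicial} would allow one zero too many.
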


\begin{proof}
	Compute the right hand side by
	\begin{multline}
		H(zt) - z H(t) = \sum_{n \geq 0} h_n (zt)^n - z \sum_{n \geq 0} h_n t^n \\ = h_0(1-z) + z \sum_{n \geq 2} (z^{n-1} - 1) h_n t^n = (1-z) - z(1-z) \sum_{n \geq 0} [n-1]_z h_n t^n.
	\end{multline}
	Therefore,
	\begin{equation}
		\log \frac{1-z}{H(zt) - z H(t)} = \sum_{p \geq 1} \frac{1}{p} \left(z \sum_{n \geq 0} [n-1]_z h_n t^n \right)^p.
	\end{equation}
	Observe that
	\begin{equation}
		z \sum_{n \geq 0} [n-1]_z h_n t^n = \sum_{j \geq 1} z^j \sum_{n \geq j+1} h_n t^n.
	\end{equation}
	The coefficient of $ z^k $ in $
		\left(\sum_{j \geq 1} z^j \sum_{n \geq j+1} h_n t^n \right)^p $ is
	\begin{equation}\label{eq:pair_hyp_cont}
		\sum_{\mu} \sum_\lambda h_\lambda t^{|\lambda|}
	\end{equation}
	where $ \mu = (\mu_1,\dots,\mu_p) $ is a composition of $ k $, $ \lambda = (\lambda_1,\dots,\lambda_p)$
	is composition such that $
		\lambda_i \geq \mu_i + 1 $ for every $ i \in \Z/p\Z $. For any hypersimplicial weak composition $\alpha$, let $\lambda$ be the composition consisting of only positive parts in $\alpha$ in order, and let $\mu$ be the composition such that $\mu_i$ is the circular distance between $\lambda_i$ and $\lambda_{i+1}$.
	Then~\eqref{eq:pair_hyp_cont} is equal to
	\[
		\sum_{\substack{\alpha \in \Z_{\geq0}^{\Z/k\Z}, \\ \alpha \models n, \\ \alpha \text{ is hypersimplicial}}} h_\alpha
	\]
	which is equal to the left hand side.
\end{proof}

\begin{proof}[Proof of Theorem~\ref{thm:GHyp}]
	By Lemma~\ref{lem:Gtrace}
	\begin{equation}
		-\log \det(I - t\Theta X) = \sum_{n \geq 1} \frac{\tr((\Theta X)^n)t^n}{n} = \sum_{n \geq 1} \sum_{k \geq 0} \frac{G_{k,n} z^k t^n}{n}
	\end{equation}
	By Lemma~\ref{lem:detI-t},
	\begin{align}
		-\log \det(I - t\Theta X) & = \log H(t) + \log H(zt) + \log \frac{1-z}{H(zt) - zH(t)} \\
		                          & = \log H(t) + \log H(zt) + \sum_{n \geq 1} \sum_{k \geq 0} \frac{\Hyp_{k,n} t^n z^k}{k}.
	\end{align}
	The error terms $\log H(t)$ and $\log H(zt)$ only contribute to the coefficients of $z^k t^n$ with $k=0$ or $k=n$, so the equality holds for all $1 \leq k \leq n-1$.
\end{proof}

\begin{corollary}\label{cor:Gsym}
	The sum
	\[
		\sum_{w \in W(\alpha)} \binom{\ceq(w)}{k - \cdes(w)} = \langle h_\alpha, G_{k,n} \rangle
	\]
	depends only on the multiset of parts of $ \alpha $.
\end{corollary}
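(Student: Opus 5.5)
The plan is to identify the sum with a coefficient of $G_{k,n}$ in the monomial basis, and then use that $G_{k,n}$ is symmetric. Write $x^\alpha := \prod_i x_i^{\alpha_i}$. By definition~\eqref{eq:Gkn}, the coefficient of $x^\alpha$ in $G_{k,n}$ is obtained by collecting all cyclic words $w:\Z/n\Z\to\Z_{>0}$ with $\prod_i x_{w(i)} = x^\alpha$. These are exactly the words with $\content(w)=\alpha$, i.e.\ $w\in W(\alpha)$. Hence
\[
[x^\alpha]\,G_{k,n} = \sum_{w \in W(\alpha)} \binom{\ceq(w)}{k-\cdes(w)}.
\]

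Next I will invoke Theorem~\ref{thm:GHyp}: for $1\le k\le n-1$, $G_{k,n} = \frac{n}{k}\Hyp_{k,n}$. By~\eqref{eq:Hypkn}, $\Hyp_{k,n}$ is a finite sum of complete homogeneous symmetric functions $h_\beta$, so $G_{k,n}$ is a homogeneous symmetric function of degree $n$. For any symmetric function $f$, the monomial expansion is $f=\sum_\lambda c_\lambda m_\lambda$. The coefficient of $x^\alpha$ in $f$ therefore equals $c_{\lambda}$, where $\lambda$ is the partition obtained by sorting the parts of $\alpha$.

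Under the Hall inner product, $\langle h_\mu, m_\lambda\rangle=\delta_{\mu\lambda}$. Moreover $h_\alpha$ depends only on the multiset of parts of $\alpha$, since the $h_i$ commute. So $\langle h_\alpha, f\rangle = c_{\mathrm{sort}(\alpha)} = [x^\alpha] f$. Taking $f=G_{k,n}$ gives the displayed equality. Both sides depend only on $\mathrm{sort}(\alpha)$, because the right side does.

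The only step needing care is the boundary cases $k=0$ or $k=n$, where Theorem~\ref{thm:GHyp} does not apply. In those cases I will argue symmetry of $G_{k,n}$ directly from Lemma~\ref{lem:Gtrace} and the proof of Theorem~\ref{thm:GHyp}. There, $-\log\det(I-t\Theta X)$ is expressed through $H(t)$ and $H(zt)$, which are symmetric in the $x_i$. Hence every coefficient $G_{k,n}$ is symmetric for all $k$.

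A second route, in case one wants to avoid the boundary discussion, is even simpler. Lemma~\ref{lem:detI-t} shows that $\det(I-t\Theta X)$ is symmetric in $x$. The identity $\log\det = \tr\log$ then gives that $\sum_k G_{k,n}z^k$ is symmetric for every $n$. Either way, the main point is just symmetry of $G_{k,n}$; no real obstacle is expected.
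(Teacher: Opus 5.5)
Your proof is correct and follows the paper's argument: you identify the sum as the $x^\alpha$-coefficient of $G_{k,n}$, obtain symmetry of $G_{k,n}$ from Theorem~\ref{thm:GHyp}, and extract that coefficient using the duality $\langle h_\mu, m_\lambda\rangle=\delta_{\mu\lambda}$. Your extra observation that symmetry also follows directly from Lemma~\ref{lem:detI-t} is a small genuine improvement over the paper's write-up, since it also covers the cases $k=0$ and $k=n$, which Theorem~\ref{thm:GHyp} does not.
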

\begin{proof}
	By definition, $\Hyp_{k,n}$ is a symmetric function. By Theorem~\ref{thm:GHyp}, $G_{k,n}$ is also a symmetric function. The definition of $G_{k,n}$ expands it into the monomial basis $\{m_\alpha\}$ of the ring of symmetric functions. Since the monomial symmetric function is dual to the complete homogeneous symmetric function, taking inner product with $h_\alpha$ extracts the coefficient of $m_\alpha$, which is the left hand side.
\end{proof}

\begin{theorem}\label{thm:ftoG}
	For all $ 0 < k < n $
	and every nonempty subset $ S \subseteq [n] $ with $
		\ccomp(S) = [\alpha]$, we have
	\begin{equation}
		\sum_{j \in \Z/n\Z} f_{S+j}(\Tri) = \langle h_\alpha, G_{k,n} \rangle.
	\end{equation}
\end{theorem}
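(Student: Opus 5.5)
The plan is to count faces by the pair (type, colour of the base), then combine Proposition~\ref{prop:compF}, Lemma~\ref{lem:alphap} and Corollary~\ref{cor:Gsym}. The left-hand side $\sum_{j\in\Z/n\Z} f_{S+j}(\Tri)$ counts pairs $(j,F)$ with $\typ(F)=S+j$. For such a pair we have $\col(\base(F))\in\typ(F)=S+j$, since $\base(F)$ is a vertex of $F$. So $\col(\base(F))=i+j$ for a unique $i\in S$, and we can write
\[
\sum_{j} f_{S+j}(\Tri)=\sum_{i\in S}\ \sum_{j\in\Z/n\Z}\#\{F:\typ(F)=S+j,\ \col(\base(F))=i+j\}.
\]

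Next, fix $i\in S$ and set $\beta=\comp^{(i)}(S)$, a rotation of $\alpha$. I claim the inner sum over $j$ equals the number $N_\beta$ of faces $F$ with $\comp(F)=\beta$. By the proof of Proposition~\ref{prop:compF}, the colours of the vertices of $F$, read in the order $I_0,I_1,\dots$, are $c,\ c+\beta_1,\ c+\beta_1+\beta_2,\dots$, where $c=\col(\base(F))$.
\begin{itemize}
\item If $\comp(F)=\beta$, then $\typ(F)$ is the translate of $S$ taking $i$ to $c$. So $F$ is counted exactly once, for $j=c-i$.
\item Conversely, if $\typ(F)=S+j$ and $\col(\base(F))=i+j$, then reading gaps clockwise from $i+j$ in $S+j$ gives $\comp(F)=\comp^{(i)}(S)=\beta$.
\end{itemize}
This gives a bijection between pairs $(j,F)$ with fixed $i$ and faces with $\comp(F)=\beta$. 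I expect this bookkeeping to be the main obstacle. When $S$ has a nontrivial rotational stabiliser, several $j$ give the same type, and several $i$ give the same $\beta$. The point to verify is that the pair $(i,j)$ is still uniquely determined: $j$ is fixed by $c-i$, and $i$ is fixed by $\col(\base F)-j$. Hence no overcounting occurs, and repeated $\beta$'s are correctly summed with multiplicity.

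Finally, by Proposition~\ref{prop:compF} and Lemma~\ref{lem:alphap},
\[
N_\beta=\sum_{\substack{w\in W(\beta)\\ w(n)=1}}\binom{\ceq(w)}{k-\cdes(w)}=\frac{\beta_1}{n}\sum_{w\in W(\beta)}\binom{\ceq(w)}{k-\cdes(w)}.
\]
By Corollary~\ref{cor:Gsym}, the last sum equals $\langle h_\beta,G_{k,n}\rangle=\langle h_\alpha,G_{k,n}\rangle$, since $\beta$ is a rearrangement of $\alpha$. Therefore
\[
\sum_j f_{S+j}(\Tri)=\frac{1}{n}\Big(\sum_{i\in S}\comp^{(i)}(S)_1\Big)\langle h_\alpha,G_{k,n}\rangle .
\]
Here $\comp^{(i)}(S)_1$ is the length of the arc clockwise after $i$. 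These arcs partition the circle, so $\sum_{i\in S}\comp^{(i)}(S)_1=n$, and the theorem follows.
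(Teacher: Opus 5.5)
Your proposal is correct and follows the paper's proof. The paper likewise writes $\sum_{j} f_{S+j}(\Tri)$ as the sum of the face counts $N(\alpha^{(i)})$ from Proposition~\ref{prop:compF} over the rotations of $\alpha$, turns each into $\frac{\alpha_i}{n}\langle h_\alpha, G_{k,n}\rangle$ via Lemma~\ref{lem:alphap} and Corollary~\ref{cor:Gsym}, and concludes with $\sum_i \alpha_i = n$; the only difference is that you spell out the base-colour bookkeeping behind the first equality, including the stabiliser case, which the paper asserts without comment.
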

\begin{proof}
	Let
	\[
		N(\alpha) = \sum_{\substack{w \in W(\alpha) \\ w(n) = 1}} \binom{\ceq(w)}{k - \cdes(w)}.
	\]
	By Lemma~\ref{lem:alphap},
	\begin{multline}
		\sum_{j \in \Z/n\Z} f_{S+j}(\Tri)  = \sum_{i=0}^{p-1} N(\alpha^{(i)})
		\\= \frac{1}{n} \sum_{i=0}^{p-1} \alpha_i \sum_{w \in W(\alpha^{(i)})} \binom{\ceq(w)}{k - \cdes(w)} = \sum_{w \in W(\alpha)} \binom{\ceq(w)}{k - \cdes(w)}
	\end{multline}
	because $\langle h_{\alpha^{(i)}}, G_{k,n} \rangle = \langle h_\alpha, G_{k,n} \rangle$ for any $i \in \Z/p\Z$ and $\sum \alpha_i = n$.
\end{proof}

\begin{proof}[Proof of Theorem~\ref{thm:repmain}]
	If $\gcd(k,n) = 1$, by Corollary~\ref{cor:gcd}, the left hand side of Theorem~\ref{thm:ftoG} is equal to $n f_S(\Tri)$. By Theorem~\ref{thm:GHyp}, the right hand side is equal to $\frac{n}{k} \langle h_\alpha, \Hyp_{k,n} \rangle$.
	The character of $M^\alpha$ is $h_\alpha$, and the character of $\mathcal{M}_{k,n}$ is $\Hyp_{k,n}$, so the dimension of $\Hom_{S_n}(M^\alpha, \mathcal{M}_{k,n})$ is equal to $\langle h_\alpha, \Hyp_{k,n} \rangle$.
\end{proof}

By Corollary~\ref{cor:gcd}, we have the following fact.
\begin{corollary}
	Let $ d = \gcd(k,n) $, and let $ S $
	be a nonempty subset of $ [n] $ with $
		\ccomp(S) = [\alpha] $. Then $ f_{S+j}(\Tri) $ depends on $ j $ only through $ j \bmod d $, and
	\begin{equation}
		\frac{n}{d} \sum_{j=0}^{d-1} f_{S+j}(\Tri) = \sum_{w \in W(\alpha)} \binom{\ceq(w)}{k - \cdes(w)}.
	\end{equation}
\end{corollary}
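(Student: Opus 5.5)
The plan is to combine Corollary~\ref{cor:gcd} with Theorem~\ref{thm:ftoG}, using the action of $\Z/n\Z$ on subsets. Fix a nonempty $S$ with $\ccomp(S)=[\alpha]$. Every translate $S+j$ has the same cyclic composition, since rotating the circle does not change the arc lengths. Theorem~\ref{thm:ftoG} therefore applies, and gives
\[
\sum_{j\in\Z/n\Z} f_{S+j}(\Tri)=\langle h_\alpha,G_{k,n}\rangle=\sum_{w\in W(\alpha)}\binom{\ceq(w)}{k-\cdes(w)},
\]
where the second equality is Corollary~\ref{cor:Gsym}. It then remains to rewrite the left side as $\frac{n}{d}\sum_{j=0}^{d-1} f_{S+j}(\Tri)$.

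For the first claim, that $f_{S+j}$ depends only on $j \bmod d$, I would invoke the Proposition giving $f_S(\Tri)=f_{S+k}(\Tri)$. Iterating it shows that $j\mapsto f_{S+j}$ is invariant under translation by the subgroup $\langle k\rangle\subseteq\Z/n\Z$. This subgroup equals $d\Z/n\Z$ with $d=\gcd(k,n)$, which is exactly Corollary~\ref{cor:gcd}.

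For the identity, I would partition $\Z/n\Z$ into the $d$ cosets $j+d\Z/n\Z$, for $j=0,\dots,d-1$. Each coset has $n/d$ elements, and $f_{S+j'}$ is constant on each one. Hence
\[
\sum_{j\in\Z/n\Z} f_{S+j}=\frac{n}{d}\sum_{j=0}^{d-1}f_{S+j},
\]
and combining this with the display above proves the statement.

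The only delicate point is bookkeeping in Theorem~\ref{thm:ftoG}. The sum over $j$ there runs over all $n$ translates even when $S$ has a nontrivial stabilizer under rotation, so some translates $S+j$ coincide and are counted more than once. I would note that this matches the form of the identity we want, because the right side $\frac{n}{d}\sum_{j=0}^{d-1}f_{S+j}$ is also indexed by $j$ and not by distinct sets. No correction for stabilizers is therefore needed; we simply use the theorem exactly as stated. I expect no real obstacle beyond checking this consistency.
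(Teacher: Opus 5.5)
Your proposal is correct and takes essentially the paper's route. The paper obtains this corollary directly from Corollary~\ref{cor:gcd} together with Theorem~\ref{thm:ftoG} (with Corollary~\ref{cor:Gsym} identifying $\langle h_\alpha, G_{k,n}\rangle$ with the word sum), by collapsing the sum over $\Z/n\Z$ into $n/d$ copies of the sum over coset representatives $j=0,\dots,d-1$. Your observation that no stabilizer correction is needed is also right.
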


\section{Hypersimplicial words}\label{sec:hypwords}

\begin{theorem}\label{thm:contingency}
	Let $ S \subseteq [n] $ and $
		\comp(S) = \alpha $. Let $ W_S $
	be the set of hypersimplicial words $ w $ with $\cDes(w) \subseteq S $.
	The cardinality of $ W_S $ is
	\begin{equation}
		|W_S| = \langle h_\alpha, \Hyp_{k,n} \rangle.
	\end{equation}
\end{theorem}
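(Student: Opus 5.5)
The plan is to expand $\Hyp_{k,n}$ in the complete homogeneous basis and compute the pairing against $h_\alpha$ by a double-coset (contingency table) count.

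\textbf{Reduction to contingency tables.} By \eqref{eq:Hypkn}, $\Hyp_{k,n}=\sum_\beta h_\beta$, where $\beta$ runs over hypersimplicial weak compositions $\beta\in\Z_{\ge0}^{\Z/k\Z}$ of $n$. For each $\beta$, $\langle h_\alpha,h_\beta\rangle$ is the number of nonnegative integer matrices with row sums $\alpha$ and column sums $\beta$. Equivalently, it is $\dim\Hom_{S_n}(M^\alpha,M^\beta)$, the number of $S_\alpha$-orbits on ordered set partitions of type $\beta$. So it suffices to exhibit, for each $\beta$, a bijection between such matrices and the words $w\in W_S$ with $\content(w)=\beta$.

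\textbf{The bijection.} Write $S=\{s_1<\dots<s_p\}$. The cyclic intervals $J_r=(s_{r-1},s_r]$ of $\Z/n\Z$ (indices mod $p$) have lengths $\alpha_1,\dots,\alpha_p$ in the appropriate order, matching $\comp(S)=\alpha$ read from $\min S$. The condition $\cDes(w)\subseteq S$ says exactly that $w$ is weakly increasing along each $J_r$, since every cyclic descent must occur at a cut point $s_r$. A weakly increasing word on $J_r$ is determined by its content, which is a weak composition of $\alpha_r$ with $k$ parts. Hence words with $\cDes(w)\subseteq S$ correspond bijectively to $p\times k$ matrices $(m_{rj})$ with row sums $\alpha_r$, where $m_{rj}=|w^{-1}(j)\cap J_r|$. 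The column sums are $\content(w)$. Restricting to hypersimplicial contents, $|W_S|=\sum_{\beta\text{ hyp.}}\#\{\text{matrices with row sums }\alpha,\text{ column sums }\beta\}=\langle h_\alpha,\Hyp_{k,n}\rangle$.

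\textbf{Expected obstacles.} The main bookkeeping issues are conventions, and I would check each one.
\begin{itemize}
\item One point is whether a descent at position $s_r$ means $w(s_r)>w(s_r+1)$, so that the cut lies after $s_r$. This is what fixes the intervals as $(s_{r-1},s_r]$.
\item Another is that $\comp(S)$ lists the arc lengths in the same cyclic order as the intervals. This only affects the ordering of the rows, and $\langle h_\alpha,\cdot\rangle$ is invariant under permuting the parts of $\alpha$ anyway.
\item The alphabet of a hypersimplicial word is $[k]$ read cyclically as $\Z/k\Z$, but ``descent'' uses the linear order on $[k]$. The content condition does not interact with the descent condition, so the count factorizes as above.
\item Edge cases need a brief remark. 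If $S=\emptyset$, only constant words qualify, and with the convention $\alpha=(n)$ both sides agree.
\end{itemize}
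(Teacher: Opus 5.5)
Your argument is correct and is essentially the paper's proof. The paper also matches words $w$ with $\cDes(w)\subseteq S$ and content $\lambda$ to contingency tables with row sums $\alpha$ and column sums $\lambda$, by writing $a_{r1}$ ones, then $a_{r2}$ twos, and so on, along each arc cut out by $S$. Your version adds the explicit arcs $(s_{r-1},s_r]$ and the observation that weak increase on each arc makes the map invertible.

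One correction to your edge-case remark. For $S=\emptyset$ the left side counts only the $k$ constant words. But $\langle h_n,\Hyp_{k,n}\rangle$ counts all hypersimplicial contents; for $(k,n)=(2,4)$ these are $(4,0)$, $(0,4)$, $(2,2)$, giving $3\neq 2$. So the two sides do not agree there. The composition $(n)$ is what a singleton $S$ produces, not the empty set. The statement should therefore be read for nonempty $S$, which is also needed for $\comp(S)$ to be defined.
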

Recall that $\langle h_\alpha, h_\lambda \rangle$ is equal to the number of matrices with nonnegative integer entries with row sum $\alpha$ and column sum $\lambda$. The set of matrices with fixed row sum vector and column sum vector are also called \emph{contingency tables}.
\begin{proof}
	We construct a bijection between hypersimplicial words $w$ with $\cDes(w) \subseteq S$, content $\lambda$, and contingency tables with row sum $\alpha$ and column sum $\lambda$.
	Let $(a_{ij})$ be such a contingency table, with row sum $\alpha$ and column sum $\lambda$.
	Let $w$ be the word with $a_{11}$ ones followed by $a_{12}$ twos, etc., up to $a_{1n}$ $n$'s, then $a_{21}$ ones, $a_{22}$ twos, etc.
	By construction, $\cDes(w) \subseteq S$ for any $S$ with $\comp(S) = \alpha$ and $\content(w) = \lambda$.
\end{proof}

The $p$-shift~\eqref{eq:pshift} gives a bijection between $\hDOSP(k,n) \times \Z/k\Z $ onto hypersimplicial words via
\[
	(\dosp,p) \mapsto w_\dosp^{(p)}.
\]
Since the winding number is a cyclic invariant, we have $ |\cDes(w_\dosp^{(p)})| = \wind(\dosp) $ for any $ p \in \Z/k\Z $.

\begin{proof}[Proof of Theorem~\ref{thm:hmain}]
	By M\"obius inversion,
	\[
		f_S(\Tri) = \sum_{T \subseteq S} h_T(\Tri),
	\]
	so it suffices to prove that
	\begin{equation}\label{eq:mainmobius}
		\sum_{S: \ccomp(S) = [\alpha]} f_S(\Tri) = \sum_{S: \ccomp(S) = [\alpha]} \left| \left\{ \dosp \in \hDOSP(k,n): \cDes(\dosp) \subseteq S \right\} \right|.
	\end{equation}
	By the bijection from $\hDOSP(k,n) \times \Z/k\Z $ onto hypersimplicial words via $(\dosp, p)\mapsto w_\dosp^{(p)}$,
	we multiply both sides of~\eqref{eq:mainmobius} by $k$ to obtain
	\begin{equation}
		\begin{aligned}
			k\sum_{S: \ccomp(S) = [\alpha]} f_S(\Tri) & = \sum_{p \in \Z/k\Z}\sum_{S: \ccomp(S) = [\alpha]} \left| \left\{ \dosp \in \hDOSP(k,n): \cDes(w_\dosp^{(p)}) \subseteq S \right\} \right| \\
			                                          & = \sum_{S: \ccomp(S) = [\alpha]} \left| \left\{ (\dosp,p) \in \hDOSP(k,n) \times \Z/k\Z: \cDes(w_\dosp^{(p)}) \subseteq S \right\} \right| \\
			                                          & = \sum_{S: \ccomp(S) = [\alpha]} W_S.
		\end{aligned}
	\end{equation}
	Fix some $S \subseteq [n]$ with $\ccomp(S) = [\alpha]$ and let $O(S) = \{S+j: j \in \Z/n\Z\}$ be the orbit of $S$ under the $\Z/n\Z$ action, and let $\Stab(S)$ be the stabilizer of $S$ under this action.
	By the orbit-stabilizer theorem,
	\[
		\sum_{j \in \Z/n\Z} f_{S+j}(\Tri) = |\Stab(S)| \sum_{S' \in O(S)} f_{S'}.
	\]
	The set $\{S \subseteq [n]: \ccomp(S) = [\alpha]\}$ is a union of such orbits, so it suffices to show that
	\[
		\sum_{S' \in O(S)} f_{S'} = \sum_{S' \in O(S)} |W_{S'}|.
	\]
	Again by the orbit stabilizer theorem,
	\[
		|\Stab(S)| \sum_{S' \in O(S)} |W_{S'}| = \sum_{j \in \Z/n\Z} |W_{S+j}|.
	\]
	However, if $\ccomp(S) = [\alpha]$, then $\ccomp(S+j) = [\alpha]$, so the right hand side is equal to $n |W_S| = n
	\langle h_\alpha, \Hyp_{k,n}\rangle$ by Theorem~\ref{thm:contingency}. By Theorem~\ref{thm:ftoG} we reduce the theorem to
	\[
		k \langle h_\alpha, G_{k,n} \rangle = n \langle h_\alpha, \Hyp_{k,n} \rangle
	\]
	which holds by Theorem~\ref{thm:GHyp}.
\end{proof}

\bibliographystyle{alpha} \bibliography{main}

@article {alcove1,
    AUTHOR = {Lam, Thomas and Postnikov, Alexander},
     TITLE = {Alcoved polytopes. {I}},
   JOURNAL = {Discrete Comput. Geom.},
  FJOURNAL = {Discrete \& Computational Geometry. An International Journal
              of Mathematics and Computer Science},
    VOLUME = {38},
      YEAR = {2007},
    NUMBER = {3},
     PAGES = {453--478},
      ISSN = {0179-5376,1432-0444},
   MRCLASS = {52B12 (52B40)},
  MRNUMBER = {2352704},
MRREVIEWER = {Jes\'us\ A.\ De Loera},
       DOI = {10.1007/s00454-006-1294-3},
}

@article {stanley80,
    AUTHOR = {Stanley, Richard P.},
     TITLE = {Decompositions of rational convex polytopes},
   JOURNAL = {Ann. Discrete Math.},
  FJOURNAL = {Annals of Discrete Mathematics},
    VOLUME = {6},
      YEAR = {1980},
     PAGES = {333--342},
   MRCLASS = {52A43},
  MRNUMBER = {593545},
MRREVIEWER = {P.\ McMullen},
}

@misc{Early2017,
      title={Conjectures for Ehrhart $h^*$-vectors of Hypersimplices and Dilated Simplices}, 
      author={Nick Early},
      year={2017},
      eprint={1710.09507},
      archivePrefix={arXiv},
      primaryClass={math.CO},
      url={https://arxiv.org/abs/1710.09507}, 
}

@article{Kim2020,
    AUTHOR = {Kim, Donghyun},
     TITLE = {A combinatorial formula for the {E}hrhart {$h^\ast$}-vector of
              the hypersimplex},
   JOURNAL = {J. Combin. Theory Ser. A},
  FJOURNAL = {Journal of Combinatorial Theory. Series A},
    VOLUME = {173},
      YEAR = {2020},
     PAGES = {105213, 15},
      ISSN = {0097-3165,1096-0899},
   MRCLASS = {05E45 (05E10 52B20)},
  MRNUMBER = {4056090},
MRREVIEWER = {Hailun\ Zheng},
       DOI = {10.1016/j.jcta.2020.105213},
       URL = {https://doi.org/10.1016/j.jcta.2020.105213},
}

@article{Stanley1979,
    AUTHOR = {Stanley, Richard P.},
     TITLE = {Balanced {C}ohen-{M}acaulay complexes},
   JOURNAL = {Trans. Amer. Math. Soc.},
  FJOURNAL = {Transactions of the American Mathematical Society},
    VOLUME = {249},
      YEAR = {1979},
    NUMBER = {1},
     PAGES = {139--157},
      ISSN = {0002-9947,1088-6850},
   MRCLASS = {05A99 (06A10 13H10 52A40 57Q05)},
  MRNUMBER = {526314},
MRREVIEWER = {G.\ O. H. Katona},
       DOI = {10.2307/1998915},
       URL = {https://doi.org/10.2307/1998915},
}

@article {magicnumber,
    AUTHOR = {Parisi, Matteo and Sherman-Bennett, Melissa and Tessler, Ran
              and Williams, Lauren},
     TITLE = {The magic number conjecture for the {$m=2$} amplituhedron and
              {P}arke-{T}aylor polytopes},
   JOURNAL = {S\'em. Lothar. Combin.},
  FJOURNAL = {S\'eminaire Lotharingien de Combinatoire},
    VOLUME = {93B},
      YEAR = {2025},
     PAGES = {Art. 103, 12},
      ISSN = {1286-4889},
   MRCLASS = {52B05 (05A99 05E14)},
  MRNUMBER = {4973261},
}

@article {DerksenFink,
    AUTHOR = {Derksen, Harm and Fink, Alex},
     TITLE = {Valuative invariants for polymatroids},
   JOURNAL = {Adv. Math.},
  FJOURNAL = {Advances in Mathematics},
    VOLUME = {225},
      YEAR = {2010},
    NUMBER = {4},
     PAGES = {1840--1892},
      ISSN = {0001-8708,1090-2082},
   MRCLASS = {52B40 (05B35)},
  MRNUMBER = {2680193},
MRREVIEWER = {Jakayla\ R.\ Robbins},
       DOI = {10.1016/j.aim.2010.04.016},
       URL = {https://doi.org/10.1016/j.aim.2010.04.016},
}

@article {EllzeyWachs,
    AUTHOR = {Ellzey, Brittney and Wachs, Michelle L.},
     TITLE = {On enumerators of {S}mirnov words by descents and cyclic
              descents},
   JOURNAL = {J. Comb.},
  FJOURNAL = {Journal of Combinatorics},
    VOLUME = {11},
      YEAR = {2020},
    NUMBER = {3},
     PAGES = {413--456},
      ISSN = {2156-3527,2150-959X},
   MRCLASS = {05E05},
  MRNUMBER = {4101322},
MRREVIEWER = {Matt\ S.\ Davis},
       DOI = {10.4310/JOC.2020.v11.n3.a1},
       URL = {https://doi.org/10.4310/JOC.2020.v11.n3.a1},
}

@article{ellzey,
    AUTHOR = {Ellzey, Brittney},
     TITLE = {Chromatic quasisymmetric functions of directed graphs},
   JOURNAL = {S\'em. Lothar. Combin.},
  FJOURNAL = {S\'eminaire Lotharingien de Combinatoire},
    VOLUME = {78B},
      YEAR = {2017},
     PAGES = {Art. 74, 12},
      ISSN = {1286-4889},
   MRCLASS = {05E05 (05C15 05C20)},
  MRNUMBER = {3678656},
}
\end{document}